\newtheorem{definition}{Definition}
\newtheorem{theorem}{Theorem}
\newtheorem{lemma}[theorem]{Lemma}
\newtheorem{proposition}[theorem]{Proposition}
\newtheorem{remark}{Remark}
\newcommand{\cC}{\mathcal{C}}
\newcommand{\cF}{\mathcal{F}}
\newcommand{\cN}{\mathcal{N}}
\newcommand{\cP}{\mathcal{P}}
\newcommand{\cT}{\mathcal{T}}
\newcommand{\cX}{\mathcal{X}}
\newcommand{\bR}{\mathbf{R}}
\newcommand{\R}{\mathbb{R}}
\renewcommand{\P}{\mathbb{P}}
\newcommand{\E}{\mathbb{E}}
\def\1{\mathbf 1}
\newcommand{\ind}{{\mathbbm{1}}}
\renewcommand{\hat}{\widehat}
\renewcommand{\bar}{\overline}
\DeclareMathOperator{\Var}{Var}
\newcommand{\tv}{{\textsc{tv}}}
\begin{document}

\title[ ]{Inference in balanced community modulated recursive trees}

\author[]{Anna Ben-Hamou$^\star$}
\address[$\star$]{LPSM, Sorbonne University, Paris}
\email[$\star$]{anna.ben\_hamou@sorbonne-universite.fr}

\author[]{Vasiliki Velona$^\ddag$}
\address[$\ddag$]{Einstein Institute of Mathematics, Hebrew University of Jerusalem}
\email[$\ddag$]{vasiliki.velona@mail.huji.ac.il}

\maketitle

\begin{abstract}
We introduce a random recursive tree model with two communities, called \textit{balanced community modulated random recursive tree}, or BCMRT in short. In this setting, pairs of nodes of different type appear sequentially. Each node of the pair decides independently to attach to their own type with probability $1-q$, or to the other type with probability $q$, and then chooses its parent uniformly within the set of existing nodes with the selected type.  We find that the limiting degree distributions coincide for different $q$. Therefore, as far as inference is concerned, other statistics have to be studied. We first consider the setting where the time-labels of the nodes, i.e., their time of arrival, are observed but their type is not. In this setting, we design a consistent estimator for $q$ and provide bounds for the feasibility of testing between two different values of $q$. Moreover, we show that if $q$ is small enough, then it is possible to cluster the nodes in a way correlated with the true partition, even though the algorithm is exponential in time (in passing, we show that our clustering procedure is intimately connected to the NP-hard problem of minimum fair bisection). In the unlabelled setting, i.e., when only the tree structure is observed, we show that it is possible to test between different values of $q$ in a strictly better way than by random guessing. This follows from a delicate analysis of the sum-of-distances statistic.
\end{abstract}

\vspace*{.3cm}
\paragraph{Keywords:}Clustering;
combinatorial statistics;
community detection;
community modulated recursive trees;
minimum fair bisection;
parameter testing;
random recursive trees;
Wiener index

\section{Introduction}
\subsection{Setting}
A rooted labelled tree is called \textit{recursive} if any path that starts from the root has increasing labels. The most natural model for random recursive trees is the widely studied \emph{uniform recursive tree} (URT) which is generated in this way: initially, the tree consists of one single node with label $1$, and for $i\in\{2,\dots,n\}$, node $i$ attaches to a uniformly chosen node among $\{1,\dots, i-1\}$. We are interested in a community modulated version of this model, where nodes are either of type $A$ or $B$ and attach preferentially to nodes of the same type. Such a model was recently introduced by S.~Bhamidi, R.~Fan, N.~Fraiman, and A.~Nobel in~\cite{bhamidi2022community}, who defined the \emph{community modulated recursive tree} (CMRT) as follows. Initially, the tree consists of one single edge connecting a node of  type $A$ and a node of type $B$. Then, at each subsequent time step, a new node is added to the tree and choses its type with probability $p$. It then attaches to an existing node of the same type with probability $1-q$; otherwise it attaches to a node with different type.

Using an embedding into a continuous time process and techniques from stochastic analysis, the authors in~\cite{bhamidi2022community} determined the limiting degree distribution of CMRT. In particular, when $q\neq 0$ and $p\neq 1/2$, one may use the number of nodes of degree one and two to design consistent estimators for $p$ and $q$. However, when $p=1/2$, i.e., when the two types are approximately equally represented, the preferential parameter $q$ vanishes from the limiting degree distribution and it is not clear how to estimate it. Inspired by this, we introduce a different model, called \emph{balanced community modulated random recursive tree} (BCMRT), which is accessible to discrete-time calculation but also interesting on its own right, as will be described later.

\begin{definition}[BCMRT$(q)$]In the BCMRT model, initially a single edge connects two nodes with time-label $1$, one of type $A$, denoted $(1,A)$, and one of type $B$, denoted $(1,B)$. Then, given $q\in [0,1]$, at each step $n\geq 2$, two new nodes with time-label $n$, one of type $A$ denoted $(n,A)$ and one of type $B$ denoted $(n,B)$, are added to the tree of size $2(n-1)$ in the following way:
\begin{enumerate}
\item the two nodes independently determine if they will connect to a node of their own type with probability $1-q$, or to a node of the other type with probability $q$;
\item they independently choose an existing node of the type selected in step $(1)$ and connect to it, forming a BCMRT of size $2n$.
\end{enumerate}
\end{definition}
Note that in this model, two nodes with the same time-label cannot be connected with edge. In this paper, we mainly focus on the assortative case, i.e., $q\in [0,1/2]$. When $q=1/2$ the community structure disappears: at each step, the two nodes independently attach to a uniformly chosen node in the existing tree, independently of the types. We find that the parameter $q$ vanishes asymptotically in the degree distribution of the BCMRT, as it happens in the CMRT with $p=1/2$.

\begin{figure}
\includegraphics[width=.18\textwidth]{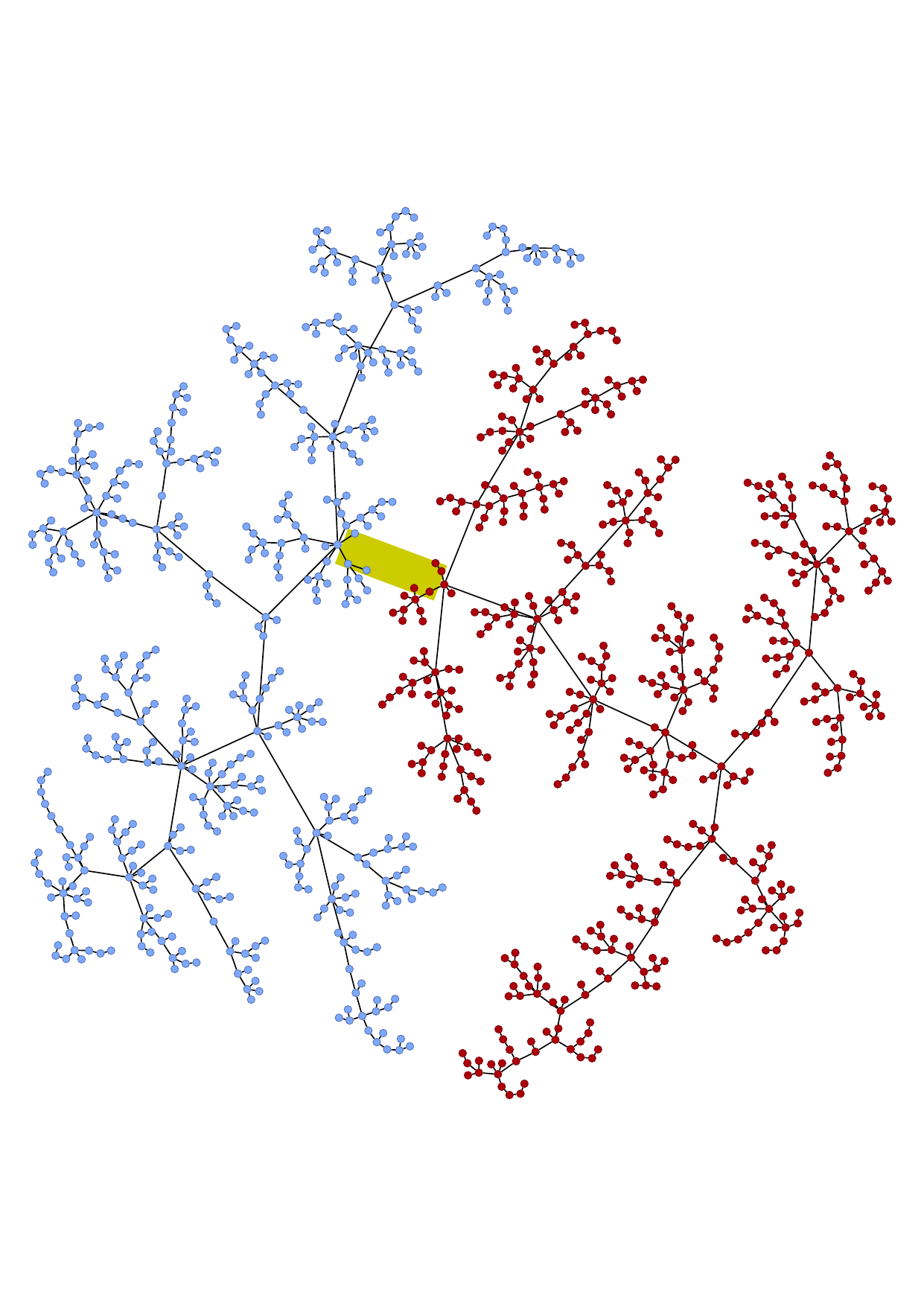}
\includegraphics[width=.18\textwidth]{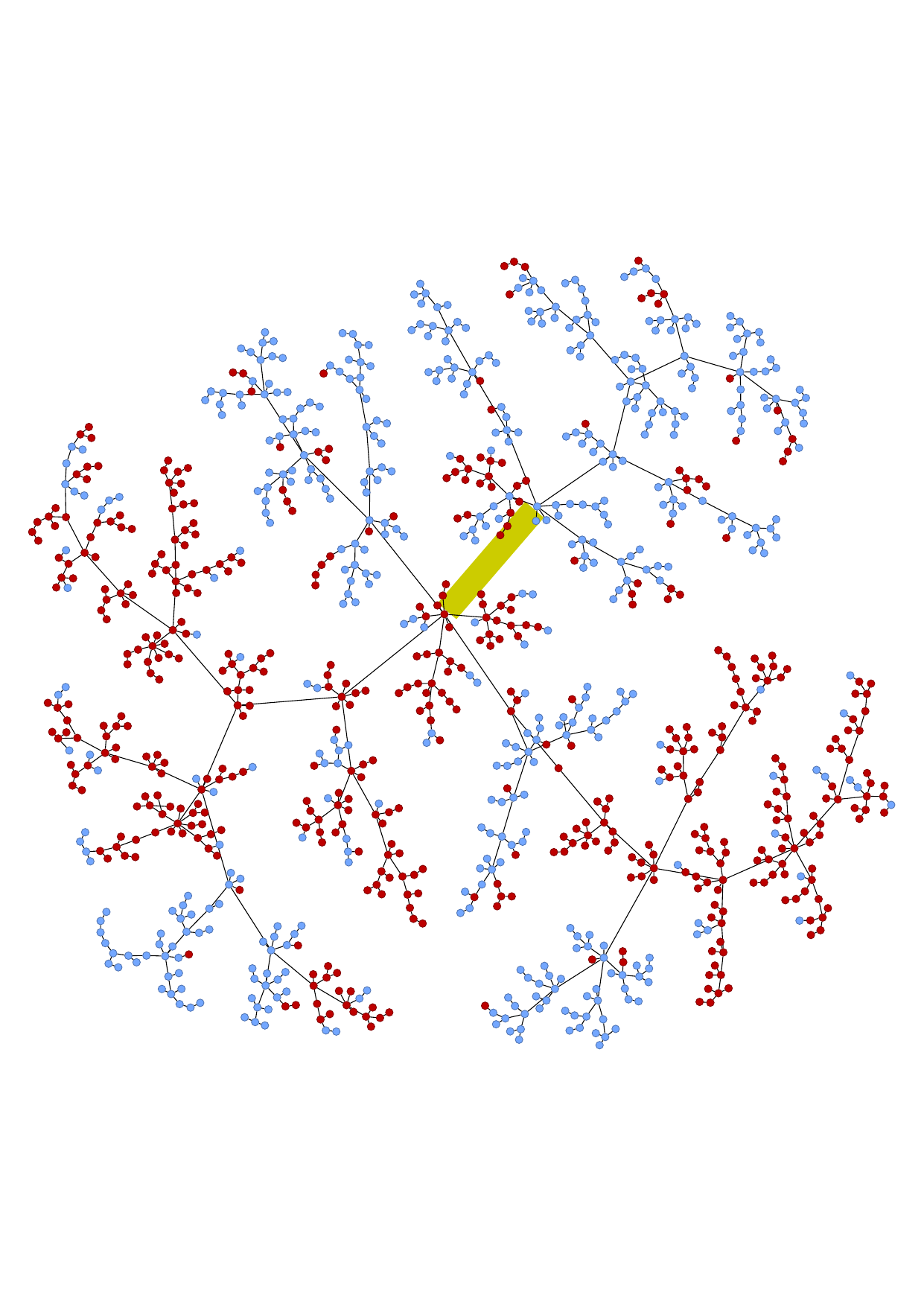}
\includegraphics[width=.18\textwidth]{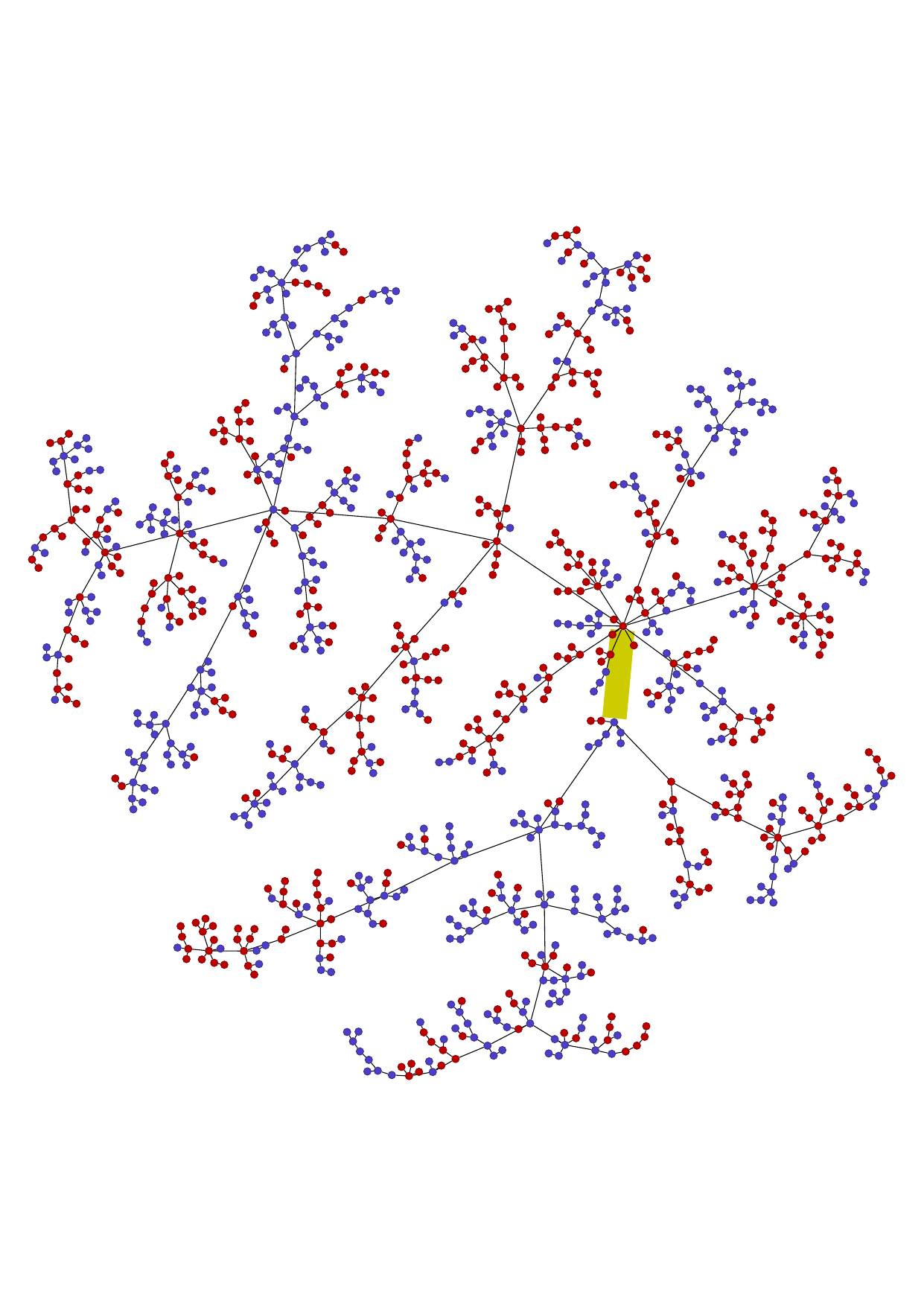}
\includegraphics[width=.18\textwidth]{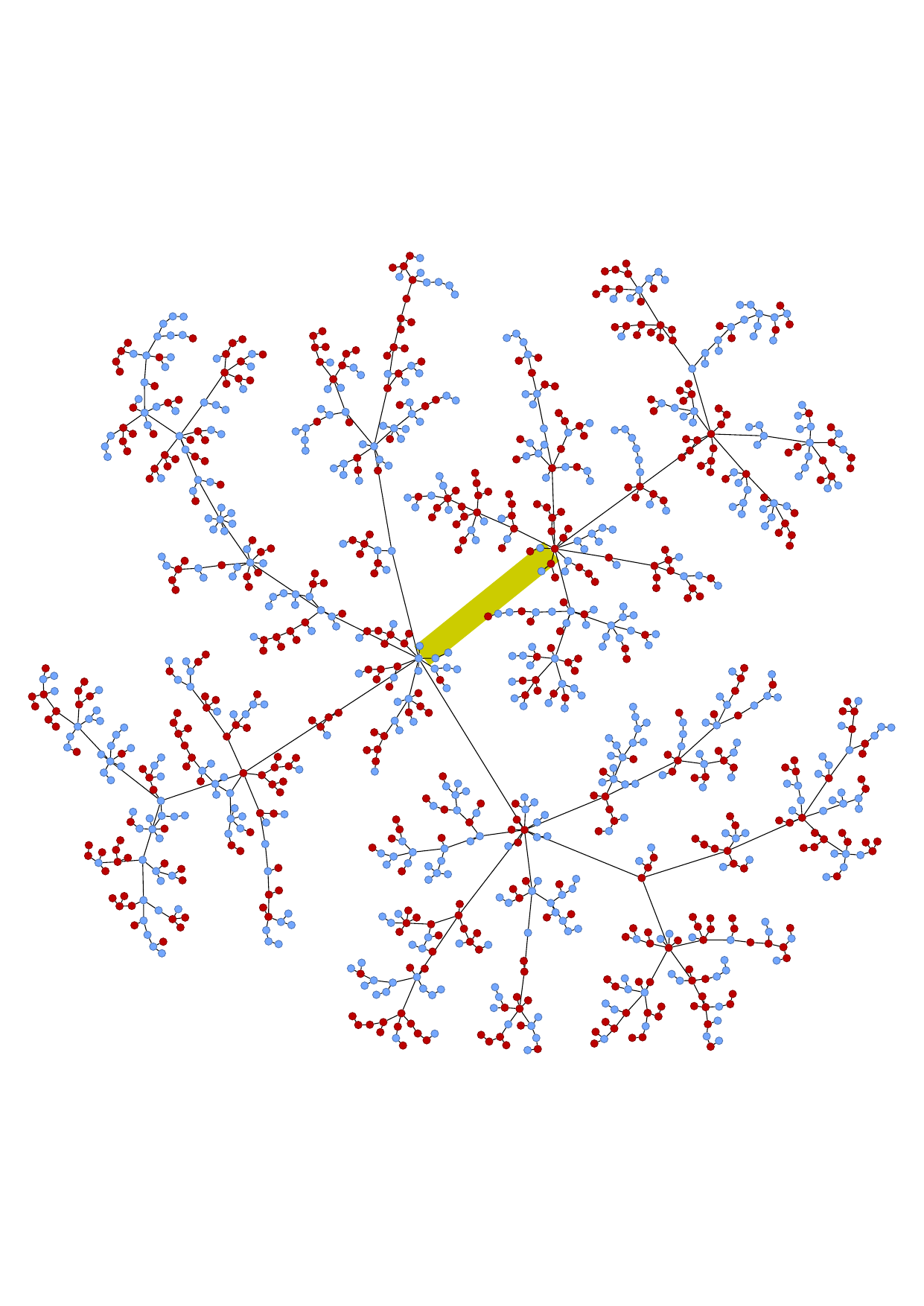}
\includegraphics[width=.18\textwidth]{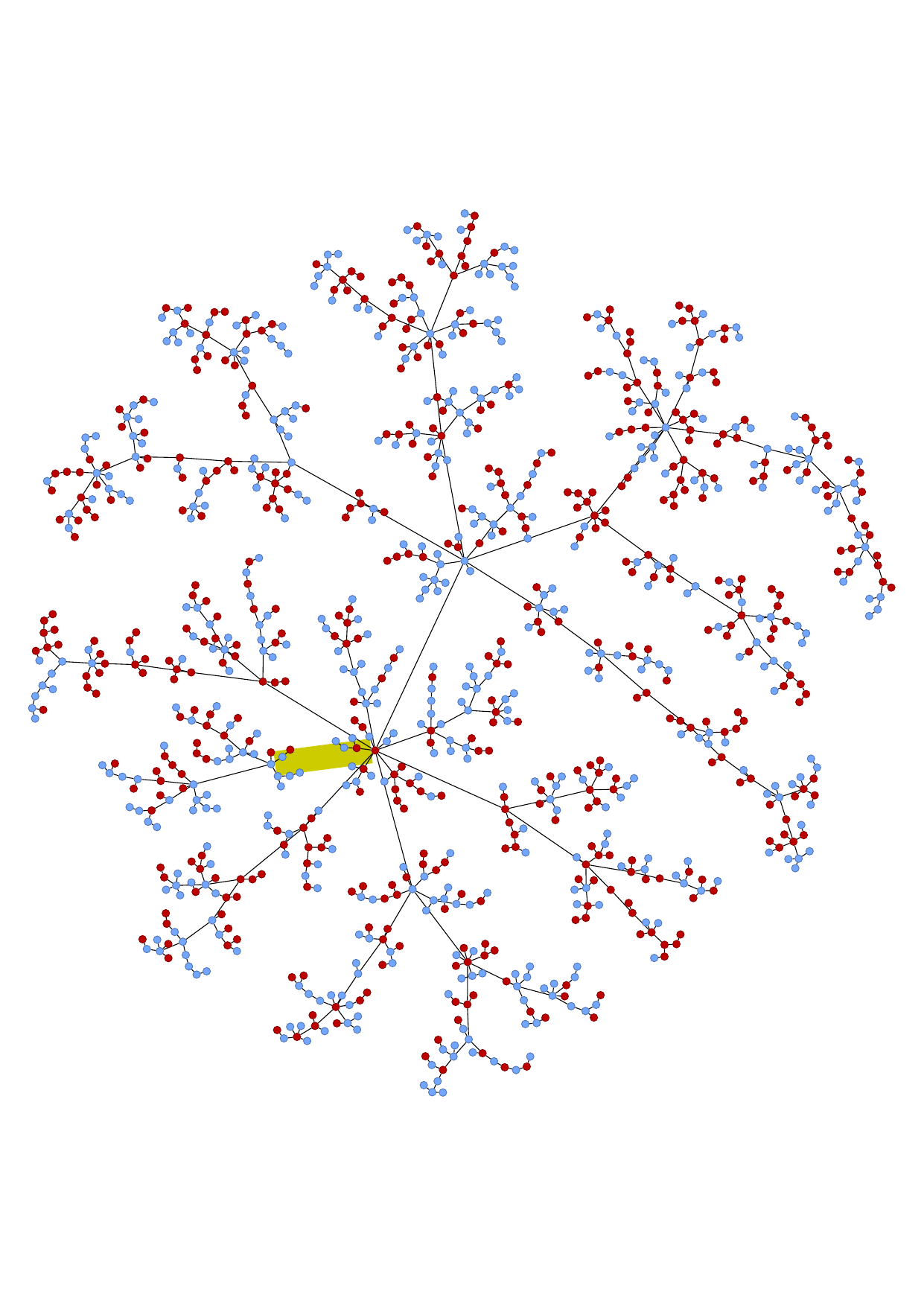} 
  \caption{A simulation of BCMRT(q) with 1000 nodes, for $q\in\left\{0,\frac{1}{8},\frac{1}{4},\frac{3}{8},\frac{1}{2}\right\}$, in order left to right. The colors red and blue represent the two communities, while the yellow edge is the initial root edge.}\label{fig:foobar}
\end{figure}

While the BCMRT was initially inspired by the CMRT, it seems to be of independent interest. It is simple and directly generalises the well-studied URT  with an embedded community structure. Moreover, while being tractable, it exhibits non-trivial behaviour in multiple questions of interest to us. Firstly, can the preferential parameter $q$ be estimated and with what accuracy? Can two different values of $q$ be tested with vanishing error? More ambitiously, is it possible to reconstruct communities better than by random guessing? Of course, the answers to those questions depend on the observational setting. Here we consider three different settings:
\begin{itemize}
    \item \emph{Time-labelled BCMRT}: the tree is observed together with time labels, i.e., nodes are labelled with the integer corresponding to their time of arrival. Each integer $k\in\{1,\dots n\}$ thus appears exactly twice, and the path from each node to its closest root has decreasing labels;
    \item \emph{Rooted unlabelled BCMRT}: the tree is observed without time labels, except for the two first nodes. In other words, the tree comes with a distinguished edge, called the root edge;
    \item \emph{Unrooted unlabelled BCMRT}: only the tree structure is observed, without any additional information.
\end{itemize}

\subsection{Main results}

In Section~\ref{sec:degree}, we put BCMRT into context with respect to other random tree models and compute the limiting degree distribution. This turns out to be the same as in the URT. For $k\geq 1$, let $N_k(n)$ be the number of nodes of degree $k$ at time $n$. Then,
    \[\frac{N_k(n)}{2n}\underset{n\to +\infty}{\overset{\P}{\longrightarrow}}\frac{1}{2^k}\, .\]
    Moreover, $\Var \left(N_1(n)\right)=\Omega \left(n\right)$.
This can be perceived as a negative result, in the sense that we do not expect to infer $q$ using the empirical degree distribution. Further evidence for that is given by the fact that the difference between the expectations of the number of degree-$k$ nodes under different values of $q$ is of constant order while we suspect the variance to be linear in $n$. The latter is only verified for the number of leaves (degree-$1$ nodes).

Section~\ref{sec:bcmrt} is devoted to the time-labelled BCMRT with $q\in [0,1/2]$. Our results in terms of inference and testing can be summarised as follows\footnote{Throughout the paper, the notation $a_n\gg b_n$ stands for $a_n/b_n \underset{n\to +\infty}{\to}+\infty$. Similarly, $a_n\ll b_n$ stands for $a_n/b_n\underset{n\to +\infty}{\to}0$. Moreover $a_n\sim b_n$ stands for $a_n/b_n\underset{n\to +\infty}{\to}1$.}.
    \begin{enumerate}
        \item[(i)] There exists an estimator $\hat{q}_n$ such that $\hat{q}_n{\overset{\P}{\longrightarrow}}q$. The speed of convergence is $(\log n)^{1/2}$ when $0< q\leq \frac{1}{2}$ and $(\log n)^{1/4}$ when $q= \frac{1}{2}$.  
        \item[(ii)] Allowing $q_0$ and $q_1$ to depend on $n$, if
        \[\left(q_1-q_0\right)^2\left(1-q_0-q_1\right)^2\gg \frac{q_1}{\log n}\, ,\]
        then one can test with vanishing error between $q_0$ and $q_1$.
        \item[(iii)] In the case $q_1=1/2$ and $q_0\in [0,1/2[$, the latter condition simplifies to $1/2-q_0\gg (\log n)^{-1/4}$. On the other hand, we show that if  $1/2-q_0\ll n^{-1/2}$, then no test can distinguish $q_0$ and $1/2$ better than random guessing. 
    \end{enumerate}

  We then consider the problem of \textit{clustering}. Assuming that the types of the nodes are hidden, the goal here is to guess the type of each node in such a way that more than half of the nodes are correctly guessed, up to  permutation.
In the time-labelled setting, we show that clustering is possible for small enough $q$. Let us elaborate on the latter. We say that $\sigma$ is a valid coloring of a time-labelled BCMRT of size $2n$ if $\sigma$ assigns the color $A$ or $B$ to each node in such a way that for each pair of nodes with the same time label, one node has color $A$ and the other has color $B$. It is not hard to see that the number of monochromatic edges for the true coloring is concentrated around $2(1-q)n$, with Gaussian fluctuations of order $\sqrt{n}$. We show that, for $q$ small enough (namely $q\leq 1/50$), any valid coloring which is as ``good'' as the true one in terms of monochromatic edges, namely that has more than $2(1-q)n -n^{2/3}$ monochromatic edges, must be correlated with it. This gives a simple (though exponential-time) algorithm to produce a partition of nodes into two groups that performs strictly better than chance: simply go through all valid colorings and output the first one that has more than $2(1-q)n -n^{2/3}$ monochromatic edges. With high probability, the correlation between this coloring and the true one is bounded away from zero. In the Appendix, we provide a proof that shows that our clustering procedure is intimately related to the NP-hard problem of \textit{minimum fair bisection}. In particular, we find that the problem of minimum fair bisection remains NP-hard in recursive trees, which in our case means that finding a coloring that maximises the number of monochromatic edges is NP-hard.

In Sections~\ref{sec:rooted} and~\ref{sec:unrooted} we focus on the rooted unlabelled and the unrooted unlabelled settings. There, we show that testing is possible with non trivial probability of error, for fixed $q$. The latter depends on the total variation distance between BCMRT trees with different parameter $q$, denoted by $d_{\tv}(q_0,q_1)$.
In the rooted unlabelled and the unrooted unlabelled settings, for $0\leq q_0<q_1\leq 1/2$, the optimal risk of a test to distinguish between $q_0$ and $q_1$ is bounded away from $1/2$, or equivalently, 
    \[\inf _n d_{\tv}(q_0,q_1)>0\, .\]
To prove this, in the rooted unlabelled setting we investigate the size of the two subtrees, $T_n^+$ and $T_n^-$, separated by the root edge, and show that the variable $|T_n^+||T_n^-|$ is a distinguishing statistic. Note that the expectation of $|T_n^+|$ is always equal to $n$ by symmetry, but one might hope that the second moment of $|T_n^+|$, and so the expectation of $|T_n^+||T_n^-|=2n|T_n^+|-|T_n^+|^2$, depends on $q$. This is indeed the case; in particular the expectation is asymptotically of the form $\alpha (q)n^2$, where $\alpha (q)$ is a decreasing function of $q$, while the variance is of order at most $n^4$. 

For the unrooted unlabelled setting, we consider a generalisation of the statistic that was used in the rooted unlabelled setting. Since the root edge is not observable anymore, we consider the sum, over all edges, of the product of the sizes of the two subtrees separated by that edge,
\[
S_n=\sum_{e\in E_n} |T_n^{e_+}||T_n^{e_-}|\, .
\]
One may notice that $S_n$ is equal to the sum of all pairwise distances in the tree, since each edge is counted as many times as the number of paths it belongs to. This variable is also known as the Wiener index (see~\cite{neininger2002wiener} for a detailed analysis on the URT) and it has been used (along with its generalizations) in~\cite{bubeck2017trees,bubeck2015influence,racz2022correlated,curien2015scaling} in the pursuit of distinguishing the seed graph in random growing trees, in the uniform and preferential attachment regimes. We show that the expectation of this variable is asymptotically equivalent to $4n^2\log(n)$, for any $q$, and the dependence on $q$ appears at the order $n^2$. In contrast with the previous section, we now need a much more precise bound on the variance in order to apply the method of distinguishing statistics. This is done by a careful application of the Efron-Stein Inequality. 

\subsection{Open questions}

This work probably leaves much room for improvement and several questions are left open. Let us here mention some of them:
\begin{enumerate}
    \item In the time-labelled setting, we showed that, for $0\leq q\leq 1/2$, the total-variation distance between BCMRT$(q)$ and BCMRT$(1/2)$ tends to $1$ when $1/2-q\gg (\log n)^{-1/4}$, and to $0$ when $1/2-q\ll n^{-1/2}$. Can we reduce the gap between those two rates?
    \item In the time-labelled setting, we showed that for $q\leq 1/50$, clustering is feasible. Is there a threshold for the value of $q$ that separates a regime where clustering is feasible and a regime where it is unfeasible, as it occurs for the Stochastic Block Model? If we had to take a guess on such a value, we would probably say $q=1/4$. Some heuristic considerations sustaining this threshold are given in Remark~\ref{rem:threshold} below.
    \item Is there a polynomial-time clustering algorithm for $q$ small enough? In the Appendix, we draw connections between our clustering procedure and the NP-hard problem of \textit{minimum fair bisection}. In view of that, if polynomial time is possible, we believe the algorithm would have to rely either i) on different model-specific properties of our model rather than the number of monochromatic edges, or ii) on the detailed structure of random colorings of BCMRT trees that would allow to look for fair bisections that are not optimal in terms of cut size but still are inside the margin, or iii) on something different entirely, e.g., spectral techniques or message passing.
    \item In the rooted unlabelled setting, we find that the second moment of $|T_n^+|/n$ depends on $q$. In particular, the limiting distribution depends on $q$. Can we characterize this limiting distribution? For the URT and some of its extensions, the limiting distribution of the fraction of nodes that lie on one given side of the first edge can rather easily be determined using a mapping to P\'olya urns. In the case of the BCMRT, we were not able to find a simple mapping. 
    \item In the unlabelled setting, rooted or even unrooted, is there a test to discriminate two distinct values of $q$ with risk tending to zero? Also, can we estimate $q$ consistently? Put differently, how much can we do without time-labels?
    \item To what extent our results on the BCMRT transfer to the 
 CMRT with $p=1/2$? In the time-labelled setting, our approach heavily relies on the fact that, at each time, one node of each type arrives, which is critical information. For the unlabelled setting, it seems intuitively appealing that the results transfer to the CMRT with $p=1/2$. However, we do not have a rigorous argument for that.
    \end{enumerate}

\begin{remark}\label{rem:threshold}
    Assume that all types are observed except for the ones on the root edge. A simple and natural way to estimate the types on the root edge is to take a majority vote within each of the two subtrees. When does this method perform better than guessing? Let $\Delta_n$ be the difference, at time $n$, between the number of nodes of type $A$ and the number of nodes of type $B$ in the subtree rooted at $(1,A)$. Then, we have $\Delta_1=1$ and a quick investigation shows that the conditional expectation of $\Delta_{n+1}$ given all the history up to time $n$ is equal to
    \[
    \Delta_n\left(1+\frac{1-2q}{n}\right)\, .
    \]
    In particular, the expected difference $\E_q[\Delta_n]$ is of order $n^{1-2q}$ and we believe that, for the majority vote to be positively correlated with the the type of the root, this surplus has to be much larger than $\sqrt{n}$, which corresponds to the typical fluctuations of a Binomial random variable with parameters $n$ and $1/2$. In particular, we expect that if $q>1/4$, then the majority vote does not outperform random guessing. This draws a parallel with the Kesten-Stigum threshold (see~\cite{kesten1966additional,10.1214/aoap/1060202828}), which turns out to be the threshold for clustering in the Stochastic Block Model. Incidentally, $q=1/4$ is also the threshold for reconstruction of the root-bit with majority vote in the broadcasting process on the URT~\cite{addario2022broadcasting}.
\end{remark}

\subsection{Related work}


Popular models such as the stochastic block model (SBM), as well as the Erd\H{o}s-Rényi model from which it is derived, are static models in the sense that they do not describe the growth of the graph. For many networks, such as social networks or the Web, modelling the temporal dynamic of the graph is of large interest. Perhaps the most simple model for growing graphs is the URT, but many variants of this model have been studied: the new node may arrive with possibly more than one edge, or the attachment rule not may not be uniform (\emph{e.g.}, the new node may attach preferentially to a node of high degree), see~\cite{van2009random} for an account on preferential attachment models. To our knowledge, the paper~\cite{bhamidi2022community} contains the first simple model for random recursive trees with multiple communities. 

Various statistical questions have been studied on random recursive trees and more general growing networks. A problem that has gained significant attention in the past few years is \emph{root finding}, that is, constructing confidence sets for the initial node when only the graph structure is observed~\cite{bubeck2017finding, crane2021inference, khim2016confidence, banerjee2022root, briend2023archaeology, crane2021root, banerjee2023degree}.
The problem of testing between different initial configurations (also called \emph{seeds}) and variants of it have been studied in~\cite{bubeck2017trees,bubeck2015influence,curien2015scaling, racz2022correlated}, as well as the problem of recovering the seed~\cite{reddad2019discovery, 10.1214/19-EJP268}.
In~\cite{addario2022broadcasting}, the authors study the broadcasting process on random recursive trees and the possibility of reconstruction of the root bit. The problem of parameter estimation has been considered in~\cite{10.1214/17-EJS1356} and~\cite{tang2020buckley}. In~\cite{casse2023siblings}, the author studies the nearest neighbour tree in the $d$-dimensional sphere and manages to find a distinguishing statistic for $d$. Interestingly, few attempts have been made to make recursive tree models meet the problem of \emph{community detection}. 

The problem of \emph{clustering}, also called community detection, is, given a graph, to find a partition of nodes into strongly connected groups, with few cross-class connections. This problem has a long history, and numerous applications in various domains where network structures arise, from population genetics to image processing and social sciences. From an algorithmic point of view, the theory of clustering reveals some fundamental challenges. The \emph{min-bisection problem} for instance, which asks for a partition of nodes into two groups of equal size with the smallest number of inter-group edges, is known to be NP-hard~\cite{garey1974some}. It quickly became clear that the problem could be made easier on random graphs with a planted bisection. Probably the most studied model is the SBM, in which nodes are divided into two groups of equal size, and then each possible edge is included with probability $p$ if the two endpoints are in the same group, $q$ otherwise, with typically $p>q$. This model turns out to be incredibly rich and exhibits fascinating phase transitions corresponding to the feasibility of clustering with different levels of accuracy. For instance, if $p=a/n$ and $q=b/n$ where $n$ is the number of nodes, it is now known that if $(a-b)^2>2(a+b)$, then it is possible to efficiently cluster in a way correlated with the true partition, while if $(a-b)^2\leq 2(a+b)$, then it is information theoretically impossible, see~\cite{mossel2015reconstruction,massoulie2014community,mossel2018proof}. The same threshold also determines the possibility of estimating the parameters $a$ and $b$~\cite{mossel2015reconstruction}. We refer the reader to~\cite{moore2017computer,abbe2015community,racz2017basic} for references on the subject. Further relevant work can be found in~\cite{hajek2019community} and~\cite{crane2021root}. In~\cite{hajek2019community}, the authors develop a message-passing clustering algorithm for preferential attachment graphs with communities. In~\cite{crane2021root}, 
the authors introduce the \textit{PAPER} model (where part of the edges come from preferential attachment and another part comes from overlaying an Erd\H{o}s-Rényi graph) and propose clustering algorithms for variations of the model with multiple communities.

The purpose of the
present work is to initiate such analyses on random growing trees with equally sized communities. Let us note that the fact that the
observed graph is a tree, while simplifying the analysis, tends to make inference more difficult, since
many observables such as triangles and short cycles, which are typically used for testing and inference
in the SBM (see~\cite{mossel2015reconstruction}), do not occur here. Combining the latter with having communities of equal
size adds another layer of difficulty, see also~\cite{hajek2019community} where this is observed in the case of preferential
attachment graphs with communities.

\section{The degree distribution}\label{sec:degree}

In order to put the model into context with respect to the existing literature, we compute the limiting degree distribution. Moreover, we provide evidence (though not a proof) that this cannot be used to distinguish between different values of $q$.

For an integer $k\geq 1$, let $N_k(n)$ be the number of nodes with degree $k$ in a BCMRT at time $n$. 
\begin{theorem} \label{deg-seq}
In the BCMRT$(q)$ model, for any integer $k\geq 1$ and for any $q\in [0,1]$, we have
 \[\frac{N_k(n)}{2n}\underset{n\to +\infty}{\overset{\P}{\longrightarrow}}\frac{1}{2^k}\, .\]\end{theorem}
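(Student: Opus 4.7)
The proof plan rests on an elementary cancellation: whatever the type of an existing vertex $v$, the probability that $v$ gains a new child at step $n+1$ equals exactly $1/n$, independent of $q$. Indeed, if $v$ has type $A$, the new node $(n+1,A)$ attaches to $v$ with probability $(1-q)/n$ and the new node $(n+1,B)$ attaches to $v$ with probability $q/n$, summing to $1/n$; the event that both attach to $v$ has probability only $q(1-q)/n^2$. This is precisely the mechanism behind the vanishing of $q$ in the degree distribution.

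I would first establish the limit of the means. Writing $m_k(n) = \mathbb{E}[N_k(n)]$ and conditioning on $\mathcal{F}_n$, the observation above yields
\[
\mathbb{E}[N_k(n+1) - N_k(n) \mid \mathcal{F}_n] = \frac{N_{k-1}(n) - N_k(n)}{n} + 2\mathds{1}_{k=1} + O(1/n),
\]
where the $O(1/n)$ absorbs the rare doubly-incremented vertices and, for $k\geq 2$, a contribution from $N_{k-2}(n)/n^2$. Taking expectations yields a linear recursion that, up to lower-order terms, coincides with the classical URT recursion scaled by a factor $2$. A standard induction on $k$, starting from $m_1(n)/(2n)\to 1/2$ obtained by solving the one-dimensional recursion explicitly (via $\Gamma$-function ratios), then gives $m_k(n)\sim 2n/2^k$.

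For the second step I would show $\Var(N_k(n)) = O(n)$ and conclude by Chebyshev's inequality. Using the decomposition $\Var(N_k(n+1)) = \Var(\mathbb{E}[N_k(n+1)\mid \mathcal{F}_n]) + \mathbb{E}[\Var(N_k(n+1)\mid \mathcal{F}_n)]$, the deterministic bound $|N_k(n+1)-N_k(n)|\leq 4$ (only two new edges per step) controls the second term, while the conditional-expectation formula above leads, after Cauchy--Schwarz on the cross-term $\mathrm{Cov}(N_k(n),N_{k-1}(n))$, to a recursion of the form
\[
\Var(N_k(n+1)) \leq \Var(N_k(n))\left(1 - \tfrac{1}{n}\right)^2 + \tfrac{C}{n}\sqrt{\Var(N_k(n))\Var(N_{k-1}(n))} + O(1).
\]
An induction on $k$ with base case $\Var(N_0(n))=0$ then yields the desired $O(n)$ bound.

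The main obstacle I anticipate is making the induction on $k$ quantitatively clean in the variance step, since lower-order terms in the mean recursion feed into the Cauchy--Schwarz cross-term. An alternative route via a Doob martingale on the attachment variables together with bounded differences is possible, but it requires an explicit coupling showing that perturbing a single attachment step only changes $N_k(n)$ by $O(1)$; this is delicate because subsequent attachment distributions depend on the evolving tree structure, so the recursive second-moment route seems cleaner.
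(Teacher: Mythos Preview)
Your treatment of the expectation is essentially the same as the paper's: derive the conditional recursion
\[
\E[N_k(n+1)\mid\mathcal{F}_n]=N_k(n)\Bigl(1-\tfrac{1}{n}+\tfrac{q(1-q)}{n^2}\Bigr)+N_{k-1}(n)\Bigl(\tfrac{1}{n}-\tfrac{2q(1-q)}{n^2}\Bigr)+\tfrac{q(1-q)}{n^2}N_{k-2}(n),
\]
solve the $k=1$ case via $\Gamma$-ratios, and push through an induction on $k$ to obtain $\E[N_k(n)]=2n/2^k+O_k(1)$.

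Where you diverge is in the concentration step, and here your instinct is backwards. The paper dispatches this in one line via McDiarmid, and your reason for avoiding bounded differences---that ``subsequent attachment distributions depend on the evolving tree structure''---does not apply to this model. In BCMRT the parent of $(j,\Lambda)$ is the vertex $(U_{j,\Lambda},\Lambda')$ where $\Lambda'$ is determined by $b_{j,\Lambda}$ alone; since types are fixed by the labels and not by the history, replacing a single pair $(b_{i,\Lambda},U_{i,\Lambda})$ changes exactly one edge of $T_n$ and leaves all other parent assignments untouched. Thus only two vertices change degree, each by $1$, so $|N_k-N_k'|\le 2$ and McDiarmid over the $2(n-1)$ independent inputs gives $\P(|N_k(n)-\E N_k(n)|\ge \varepsilon n)\le e^{-\Omega(n)}$.

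Your recursive second-moment scheme can be made to work (the base case is $k=1$, where the conditional expectation depends only on $N_1(n)$ itself, not on a nonexistent $N_0$), and it has the merit of yielding an explicit $O(n)$ variance bound, but it is substantially more laborious than the one-line bounded-differences argument you set aside.
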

  \begin{proof}[Proof of Theorem~\ref{deg-seq}]
To alleviate notation, we will write $N_k(n)$ for both the set of nodes of degree $k$ and the size of this set. For node sets $S_1,S_2$, let $S_1\leftarrow S_2$ denote the event that all nodes in $S_2$ attach with edge to a node in $S_1$. Assume $k\geq 2$. Then $N_k(n+1)$ is equal to
\begin{align} &N_k(n)-\sum_{ \Lambda\in\{A,B\}}\ind_{\{N_k(n)\leftarrow (n+1, \Lambda)\}}+\sum_{u\in N_{k}(n)}\ind_{\{u\leftarrow \{(n+1,A),(n+1,B)\}\}}
\nonumber\\
&+\sum_{ \Lambda\in\{A,B\}}\ind_{\{N_{k-1}(n)\leftarrow (n+1, \Lambda)\}}-2\sum_{u\in N_{k-1}(n)} \ind_{\{u\leftarrow \{(n+1,A),(n+1,B)\}\}}\nonumber\\
&+\sum_{u\in N_{k-2}(n)}\ind_{\{ u\leftarrow \{(n+1,A),(n+1,B)\}\}}\, ,\label{recurrence-degree-first}\end{align}
with $N_0(n)=\emptyset$. Letting $(\cF_n)_{n\geq 1}$ denote the filtration keeping track of the whole history (including types and labels), we have, for $\{ \Lambda,\bar{ \Lambda}\}=\{A,B\}$, 
\[\P\left(N_k(n)\leftarrow (n+1, \Lambda)\,|\,\mathcal{F}_n\right)=(1-q)\frac{N_k^ \Lambda(n)}{n}+q\frac{N_k^{\bar{ \Lambda}}}{n}\, ,\]
therefore the sum of these two probabilities equals $\frac{N_k(n)}{n}$. Moreover, 
\[
\sum_{u\in N_k(n)}\P\left(u\leftarrow \{(n+1,A),(n+1,B)\,|\, \cF_n\right)=\frac{q(1-q)N_k(n)}{n^2}\, \cdot 
\]
Combining~\eqref{recurrence-degree-first} with the latter expressions, 
$\E[N_k(n+1)|\mathcal{F}_n]$ is equal to
\begin{equation}\label{deg-rec}
    N_k(n)\left(1-\frac{1}{n}+\frac{q(1-q)}{n^2}\right)+N_{k-1}(n)\left(\frac{1}{n}-\frac{2q(1-q)}{n^2}\right)+\frac{q(1-q)N_{k-2}(n)}{n^2}~,
\end{equation}
with initial condition $N_k(\lfloor \frac{k}{2}\rfloor )=0$.

Following the same type of reasoning, $\E[N_1(n+1)]$ is equal to
\begin{equation}\label{leaf-rec}
    \E[N_1(n)]\left(1-\frac{1}{n}+\frac{q(1-q)}{n^2}\right)+2~,
\end{equation}
with initial condition $N_1(1)=2$. Using that $q(1-q)\leq \frac{1}{4}$, and solving the induced recurrence, we obtain that 
\begin{align}
    \E[N_1(n+1)]&\leq 2\sum _{i=0}^n\prod _{j=i+1}^n\frac{(j-\frac{1}{2})^2}{j^2}=\frac{2\Gamma(n+\frac{1}{2})^2}{\Gamma(n+1)^2}\sum _{i=0}^{n}\frac{\Gamma(i+1)^2}{\Gamma(i+\frac{1}{2})^2}\, ,\label{jarec}
     \end{align}
where $\Gamma$ is the Gamma function. By log-convexity of $\Gamma$, we have 
    \begin{align*}
        \Gamma\left(n+\frac{1}{2}\right)^2\leq \Gamma(n)\Gamma(n+1)\, ,
    \end{align*}
    and
    \begin{align*}
        \Gamma(i+1)^2\leq \Gamma\left(i+\frac{1}{2} \right)\Gamma\left(i+\frac{3}{2}\right)=\Gamma\left(i+\frac{1}{2}\right)^2\left(i+\frac{1}{2}\right)\, .
    \end{align*}
Therefore,~\eqref{jarec} can be further bounded by
     \begin{align*}
    \frac{2}{n}\sum_{i=0}^n\left(i+\frac{1}{2}\right)\leq n+3\,.
\end{align*}
We can lower-bound $\E[N_1(n+1)]$ by omitting entirely the term $\frac{q(1-q)}{n^2}$ in~\eqref{leaf-rec}, in the following way: 
\[\E[N_1(n+1)]\geq  2\sum_{i=1}^{n-1}\prod_{j=i}^{n-1}\frac{j}{j+1}+2  \geq\frac{2}{n}\sum_{i=1}^ni=n+1\, .\]
Combining both bounds,
\begin{equation}n\leq \E[N_1(n)] \leq n+2\, .\label{expectation-deg}\end{equation}

Returning to~\eqref{deg-rec}, using that for all $k$, $N_k(n)\leq 2n$ and $q(1-q)\leq \frac{1}{4}$, we have that 
\begin{equation}
    \E[N_k(n+1)|\mathcal{F}_n]\leq N_k(n)\left(1-\frac{1}{n}\right)+\frac{N_{k-1}(n)+1}{n}
\end{equation}
and 
\begin{equation}
    \E[N_k(n+1)|\mathcal{F}_n]\geq N_k(n)\left(1-\frac{1}{n}\right)+\frac{N_{k-1}(n)-1}{n}\, .\label{eq2lower}
\end{equation}
Now, one can show by induction in $k$ that there exist constants $\alpha_1(k),\alpha_2(k)$, such that for all $n\geq \lfloor\frac{k}{2}\rfloor$ we have \[\frac{n}{2^{k-1}}+\alpha _2(k)\leq \E[N_{k}(n)]\leq \frac{n}{2^{k-1}}+\alpha _1(k)\, .\] We already showed this for $N_1(n)$. 
Assuming that the claim holds for $k$, we have 
\begin{align*}
     \E[N_{k+1}(n+1)]\leq \sum_{i=\lfloor \frac{k}{2}\rfloor}^n\frac{2^{-k+1}i+\alpha_1(k)+1}{i}\prod _{j=i+1}^n\frac{j-1}{j}\leq \frac{n}{2^k}+\alpha_1(k)+1+\frac{1}{2^k}\, .
\end{align*}
The lower bound can be shown in a similar way, using~\eqref{eq2lower}. 

Convergence in probability follows by applying McDiarmid's inequality~\citep{mcdiarmid1989method} (also referred to as \textit{bounded differences inequality}).
    \end{proof}
    
        The limiting degree distribution of the BCMRT coincides with the one of the uniform random recursive tree (see~\citep{janson2005asymptotic} for an account of the behaviour of $N_k(n)$ in that model, starting from~\citep{na1970distribution} where it was first shown that $\E[N_k(n)]/n\rightarrow 2^{-k}$). Moreover, by the proof of Theorem~\ref{deg-seq}, we have that for any $k\geq 1$ and $q\neq q'$, $|\E _q[N_k(n)]-\E _{q'}[N_k(n)]|=\mathcal{O}(1)$ while we suspect the variance of $N_k(n)$ to be of order $n$. Hence we don't expect that $N_k(n)$ can be used as distinguishing statistics. In order to further solidify our intuition, we provide a proof that the variance of $N_1(n)$ is of linear order. 
    \begin{theorem}\label{variance1}
    For all $q\in [0,1[$, we have $\Var \left(N_1(n)\right)=\Omega \left(n\right)$.
    \end{theorem}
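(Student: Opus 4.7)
The plan is to combine the \emph{step-independence} of attachment decisions with the Hoeffding (ANOVA) decomposition. Let $\Omega_j = \bigl((T_j^A, I_j^A), (T_j^B, I_j^B)\bigr)$ record the step-$j$ data: $T_j^\cdot\in\{A,B\}$ is the parental type chosen by the corresponding new node and $I_j^\cdot\in\{1,\dots,j-1\}$ is the rank of the parent within that type. Since at every step $j$ both types contain exactly $j-1$ nodes, the law of $\Omega_j$ depends only on $j$ and $q$, so $\Omega_2,\dots,\Omega_n$ are mutually independent. The Hoeffding decomposition then yields
\[
\Var(N_1(n)) \;\geq\; \sum_{j=2}^n \Var\!\bigl(\E[N_1(n)\mid\Omega_j]\bigr),
\]
and it suffices to show that a typical summand is of order $j^2/n^2$.

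To compute $\E[N_1(n)\mid\Omega_j]$, the same step-independence yields, for any node $v$ born at step $b(v)\le j$,
\[
\Pr(v\text{ leaf at }n\mid\Omega_j) \;=\; \mathbf{1}\{v\notin\{P_A,P_B\}\}\prod_{\ell>b(v),\,\ell\ne j}\alpha(\ell,q),
\]
where $P_A = (I_j^A,T_j^A)$, $P_B=(I_j^B,T_j^B)$, and $\alpha(\ell,q)= 1-\tfrac{1}{\ell-1}+\tfrac{q(1-q)}{(\ell-1)^2}$ is the probability that a given existing node receives no child at step $\ell$. Summing over all $v$ (contributions of later-born nodes being independent of $\Omega_j$), the only $\Omega_j$-dependence comes from excluding the two parents, giving
\[
\E[N_1(n)\mid\Omega_j] \;=\; C_j - f(I_j^A) - f(I_j^B) + R_j,
\qquad f(i):=\prod_{\ell>i,\,\ell\ne j}\alpha(\ell,q),
\]
where $C_j$ is deterministic and $R_j$ is a collision correction supported on $\{P_A=P_B\}$, of probability $\tfrac{2q(1-q)}{j-1}$. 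Since $I_j^A$ and $I_j^B$ are independent and uniform on $\{1,\dots,j-1\}$, the problem reduces to a lower bound on $\Var(f(I))$ for $I$ uniform on that set.

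For $\ell\ge 3$ one can write $\alpha(\ell,q) = \tfrac{\ell-2}{\ell-1}\bigl(1+\tfrac{q(1-q)}{(\ell-1)(\ell-2)}\bigr)$, making the leading factor telescope: for $i\ge 2$,
\[
f(i) \;=\; \tfrac{i-1}{n-1}\,G_{j,n}(i), \qquad G_{j,n}(i) \;=\; \tfrac{1}{\alpha(j,q)}\prod_{\ell=i+1}^n\!\Bigl(1+\tfrac{q(1-q)}{(\ell-1)(\ell-2)}\Bigr).
\]
Since $\sum_\ell 1/\ell^2 < \infty$ and $q(1-q)\le 1/4$, the factor $G_{j,n}$ is bounded between two positive constants uniformly in $i,j,n$. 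Consequently $f$ is approximately linear in $i$ with slope of order $1/n$, which gives $\Var(f(I))\gtrsim(j/n)^2$ and therefore $\Var(N_1(n))\ge c\sum_{j=2}^n (j/n)^2 = \Omega(n)$.

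The main obstacle will be making this lower bound quantitatively precise and uniform in $q\in[0,1)$: one must verify that $G_{j,n}$ stays bounded away from $0$ and $\infty$ and that the near-linearity of $f$ translates to a genuine variance lower bound of the stated order (a chord-below-diagonal estimate should suffice). The collision correction $R_j$ and edge cases (e.g.\ $i=1$ or very small $j$) contribute strictly lower-order terms, which can be absorbed by restricting the sum to, say, $j\ge n^{2/3}$.
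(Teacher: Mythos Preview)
Your proposal is correct and takes a genuinely different route from the paper's proof.

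The paper uses the Doob martingale decomposition
\[
\Var(N_1(n))=\sum_{i}\E\bigl[(\E_{i+1}[N_1(n)]-\E_i[N_1(n)])^2\bigr]
\]
with $\cF_i$ the full history filtration. From the one-step recursion it writes $\E_i[N_1(n)]=N_1(i)A(i)+B(i)$, and then lower-bounds each increment on the event $\{N_1(i+1)=N_1(i)+2\}$ (both new nodes attach to non-leaves). The probability of that event is bounded below by $q(1-q)\bar N_1(i)^2/i^2$, which produces an overall factor $q(1-q)$; consequently the paper has to treat $q=0$ separately by citing known results for the URT. You instead use the first-order ANOVA/Hoeffding lower bound based on the independence of the step data $\Omega_j$, compute $\E[N_1(n)\mid\Omega_j]$ explicitly, and reduce to $\Var(f(I))$ for $I$ uniform on $\{1,\dots,j-1\}$ with $f(i)\asymp(i-1)/(n-1)$. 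This yields a bound that is uniform over $q\in[0,1)$ and covers $q=0$ without a separate argument, which is a nice bonus.

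Two small remarks on the execution. First, to turn the sandwich $c_1\le G_{j,n}(i)\le c_2$ into a genuine lower bound on $\Var(f(I))$ you need $c_2<2c_1$ (so that, say, $f(3j/4)-f(j/4)\gtrsim j/n$); this holds because $c_2/c_1\le\prod_{\ell\ge3}(1+\tfrac{q(1-q)}{(\ell-1)(\ell-2)})\le e^{q(1-q)}\le e^{1/4}<2$, so the chord estimate goes through. Second, your collision bound can be sharpened: since $R_j\le f(j-1)\lesssim j/n$, one gets $\Var(R_j)\lesssim (j/n)^2\cdot 1/(j-1)$, which is $o(\Var(f(I)))$ for all large $j$, so restricting to $j\ge n^{2/3}$ is more than sufficient (a fixed $j_0$ would already do).
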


     \begin{proof}[Proof of Theorem~\ref{variance1}] Let $(\cF_n)_{n\geq 1}$ denote the filtration keeping track of the whole history of the tree, including types and time-labels and write $\E_i$ to denote the expectation conditional on $\cF_i$. We use the martingale decomposition
        \begin{equation}
            \Var\left(N_1(n)\right)=\sum _{i=1}^{n-1}\E[(\E_{i+1}[N_1(n)]-\E_{i}[N_1(n)])^2]\, .\label{var-dec}
        \end{equation}
        Using the tower property for expectation and~\eqref{leaf-rec}, we obtain that $\E_{i}[N_1(n)]$ is equal to 
        \begin{equation}
            N_1(i)A(i)+B(i)~,\label{diff2}
        \end{equation}
        where 
        \[A(i)=\prod_{k=i}^{n-1} \left(1-\frac{1}{k}+\frac{q(1-q)}{k^2}\right)\quad \text{ and }\quad B(i)=2\sum _{j=i}^{n-1}\prod _{k=j+1}^{n-1}\left(1-\frac{1}{k}+\frac{q(1-q)}{k^2}\right)\, .\]
        We have
          \begin{align}
                    A(i+1)-A(i)&=\prod_{k=i+1}^{n-1}\left(1-\frac{1}{k}+\frac{q(1-q)}{k^2}\right)\left(1-\left(1-\frac{1}{i}+\frac{q(1-q)}{i^2}\right)\right)\nonumber\\
                    &\geq\left(\frac{1}{i}-\frac{1}{4i^2}\right)\prod_{k=i+1}^{n-1}\frac{k-1}{k}=\left(1-\frac{1}{4i}\right)\frac{1}{n-1}\geq \frac{1}{2n}\, .\label{lbound}
        \end{align}
        Moreover, 
      \begin{equation}B(i+1)-B(i)=-2A(i+1)\, . \label{betaeq}\end{equation}
Therefore, on the event that $N_1(i+1)=N_1(i)+2$ we have  
\begin{align}
 (\E_{i+1}[N_1(n)]-\E_{i}[N_1(n)])^2 &\geq  ( N_1(i)(A(i+1)-A(i)))^2\geq N_1(i)^2\frac{1}{4n^2}\, ,\label{new-bound}
\end{align}
 where for the first inequality we use ~\eqref{diff2},\eqref{betaeq}, and for the second one we use~\eqref{lbound}.

To compute the probability of this event given $\mathcal{F}_i$, write $\bar{N}^ \Lambda_1(i)$ for the nodes of type $ \Lambda$ at time $i$ that are not leaves and the overline on $ \Lambda$ denotes its complement
in the set $\{A, B\}$. We have that $\P\left(N_1(i+1)=N_1(i)+2\,|\,\mathcal{F}_i\right)$ is equal to
        \begin{align}
\frac{1}{i^2}\left((q^2+(1-q)^2)\bar{N}_1^ \Lambda(i)\bar{N}_1^{\bar{ \Lambda}}(i)+q(1-q)\left(\bar{N}_1^ \Lambda(i)^2+\bar{N}_1^{\bar{ \Lambda}}(i)^2\right)\right)\geq  \frac{q(1-q)}{i^2} \bar{N}^2(i)\, .\label{max-prob1}
        \end{align}
Hence, 
\begin{align}\E\left[\P\left(N_1(i+1)=N_1(i)+2\,|\,\mathcal{F}_i\right)N_1(i)^2\right]&\geq  \frac{q(1-q)}{i^2}\E\left[N_1(i)^2\bar{N}_1(i)^2\right]\, ,\nonumber 
\end{align}
which is further bounded by $q(1-q)i^{-2}\E\left[N_1(i)\bar{N}_1(i)\right]^2$, using Jensen's inequality. But we have that $\E\left[N_1(i)\bar{N}_1(i)\right]=\Omega (i^2)$, since the following recurrence holds by conditioning on $\mathcal{F}_i$ and then taking expectation on both sides:
\[
\E\left[N_1(i+1)\bar{N}_1(i+1)\right]\,\geq\,   \E\left[N_1(i)\bar{N}_1(i)\right]+\frac{1}{i}\E\left[N_1(i)^2\right]\, \geq\, \E\left[N_1(i)\bar{N}_1(i)\right]+i\, ,
\]
where the last inequality follows by Jensen's inequality and~\eqref{expectation-deg}.

Recalling~\eqref{var-dec} and combining with the previous relations, we conclude that
    \begin{align*}
  \Var\left(N_1(n)\right)&\geq  \frac{q(1-q)}{4n^2} \sum_{i=1}^{n-1}i^{-2} \E\left[N_1(i)\bar{N}_1(i)\right]^2=\Omega (n)
\end{align*}
when $q\neq 0$. If $q=0$, the tree consists of two independent monochromatic subtrees that grow as uniformly random recursive trees. In that case, it is already known that the variance grows linearly; see for instance~\citep{janson2005asymptotic} or~\citep{drmota2009random}.
 \end{proof}

\section{Inference, testing and clustering in time-labelled BCMRT}
\label{sec:bcmrt}

\subsection{Inference}

In the time-labelled setting, one may define an estimator for $q$ as follows. 

For $k\geq 2$, let $X_k$ be the indicator that $(k,A)$ and $(k,B)$ attach to the same node, and define 
\begin{equation}\label{eq:def-Z_n}
Z_n=\sum_{k=2}^n X_k\, .
\end{equation}
Since the sequence $(X_k)_{k=2}^n$ is independent with $X_k$ distributed as a Bernoulli random variable with parameter $\mu_k=\frac{2q(1-q)}{k-1}$, we have
\[
\E_q[Z_n]\,=\,\sum_{k=2}^n \mu_k\,\underset{n\to\infty}{\sim} \,2q(1-q)\log n\quad\text{ and }\quad \Var_q(Z_n)\,=\, \sum_{k=2}^n \mu_k(1-\mu_k)\,\underset{n\to\infty}{\sim} \,\E_q[Z_n]\, .
\]
By Chebyshev's inequality, we see that
\begin{equation}\label{eq:conv-prob-Z}
\frac{Z_n}{2\log n} \underset{ n\to \infty}{\overset{\P}{\;\longrightarrow\;}}q(1-q) \, .
\end{equation}
Inverting the function $q\mapsto q(1-q)$ on $[0,1/2]$ leads us to consider the estimator
\begin{equation}\label{eq:def-hat-q}
\hat{q}_n\,=\, \varphi\left(\frac{Z_n}{2\log n} \right)\, ,
\end{equation}
with $\varphi(x)=\frac{1}{2}\left(1-\sqrt{(1-4x)_+}\right)$ for $x\geq 0$. Here, $(u)_+$ stands for the positive part of $u$. It makes $\hat{q}_n$ well-defined, even when $\frac{Z_n}{2\log n}>\frac{1}{4}$.

In the next theorem, we show that $\hat{q}_n$ is a consistent estimator of $q$ and determine its speed of convergence. Interestingly, the approximation deteriorates as $q$ approaches $1/2$.

\begin{theorem}\label{thm:estim-labelled-BCMRT}
Let $\hat{q}_n$ be defined as in~\eqref{eq:def-hat-q}. Then, for all $0\leq q\leq 1/2$, if the tree is a BCMRT$(q)$, 
    \[
    \hat{q}_n\underset{ n\to \infty}{\overset{\P}{\;\longrightarrow\;}} q\, .
    \]
Moreover,
    \begin{itemize}
        \item if $q=0$, then $\hat{q}_n$ is a Dirac mass at $0$,
        \item if $q\in (0,1/2)$, then
        \[
\left(\frac{2\log n}{q(1-q)}\right)^{1/2}\left(\hat{q}_n-q\right) \, \underset{n\to \infty}\hookrightarrow\, \frac{1}{1-2q}\cN(0,1)\, ,
\]
\item if $q=1/2$, then 
\[
\left(\frac{\log n}{2}\right)^{1/4}\left(\hat{q}_n-\frac{1}{2}\right)\, \underset{n\to \infty}\hookrightarrow\, -\frac{1}{2}\sqrt{(\cN(0,1))_+}\, .
\]
    \end{itemize}
\end{theorem}

\begin{proof}[Proof of Theorem~\ref{thm:estim-labelled-BCMRT}]
The first statement follows simply from~\eqref{eq:conv-prob-Z} and the continuity of $\varphi$. As for the fluctuations, we first note that when $q=0$, the variable $\hat{q}_n$ is simply a Dirac mass on $0$. Now for $q\in (0,1/2)$, using that $|X_k-\mu_k|\leq 1$, and Chebyshev's inequality, we have, for all $\varepsilon>0$,
\begin{align*}
&\frac{1}{\Var_q(Z_n)}\sum_{k=2}^n \E\left[(X_k-\mu_k)^2\ind_{\left\{|X_k-\mu_k|>\varepsilon\sqrt{\Var_q(Z_n)}\right\}}\right]\\
&\hspace{1cm}\leq\, \frac{1}{\Var_q(Z_n)}\sum_{k=2}^n \frac{\Var_q(X_k)}{\varepsilon^2\Var_q(Z_n)}\,=\,\frac{1}{\varepsilon^2\Var_q(Z_n)}\, =\, o(1)\, .
\end{align*}
Therefore, Lindeberg's condition is satisfied and we have
\begin{equation}\label{eq:lindeberg}
\sqrt{\frac{2\log n}{q(1-q)}}\left(\frac{Z_n}{2\log n}-q(1-q)\right) \, \underset{n\to \infty}\hookrightarrow\, \cN(0,1)\, ,
\end{equation}
where $\hookrightarrow$ stands for convergence in distribution and $\cN(0,1)$ is a standard Gaussian random variable. Since the function $\varphi$ is differentiable on $(0,1/4)$ with $\varphi'(x)=(1-4x)^{-1/2}$, we have $\varphi'(q(1-q))=(1-2q)^{-1}$ for $q\in (0,1/2)$, and a simple Taylor expansion gives
\[
\sqrt{\frac{2\log n}{q(1-q)}}\left(\hat{q}_n-q\right) \, \underset{n\to \infty}\hookrightarrow\, \frac{1}{1-2q}\cN(0,1)\, .
\]
For $q=1/2$, we have
\[
\hat{q}_n-\frac{1}{2}=-\frac{1}{2}\sqrt{\left(1-\frac{2Z_n}{\log n}\right)_+}\, ,
\]
but by~\eqref{eq:lindeberg}, we know that $\sqrt{\frac{\log n}{2}}\left(1-\frac{2Z_n}{\log n}\right)$ converges in distribution to $\cN(0,1)$, hence
\[
\left(\frac{\log n}{2}\right)^{1/4}\left(\hat{q}_n-\frac{1}{2}\right)\, \underset{n\to \infty}\hookrightarrow\, -\frac{1}{2}\sqrt{(\cN(0,1))_+}\, .
\]
\end{proof}

\subsection{Testing}

We now move on to the following testing problem:
\[
H_0\,:\; q=q_0\qquad H_1\,:\; q=q_1\, ,
\]
with $0\leq q_0<q_1\leq 1/2$. A test $\varphi$ of $H_0$ versus $H_1$ is a function from the set of time-labelled trees that can occur as realisations of BCMRT to $\{0,1\}$. The risk of a test $\varphi$ is defined by
\[
\bR_{q_0,q_1}(\varphi)\,=\,\frac{1}{2}\left(\P_{q_0}(\varphi=1)+\P_{q_1}(\varphi=0)\right)\, ,
\]
and the optimal risk is
\[
\bR^\star_{q_0,q_1}=\,\inf_{\varphi} \bR_{q_0,q_1}(\varphi)\, ,
\]
where the infimum is taken over all tests $\varphi$. 

In the following theorem, we allow the parameter $q$ to depend on $n$, which is highlighted by the notation $q^{(n)}$. Let us stress that this does not mean that the parameter varies with time as the tree grows, but just that it may depend on the total size of the tree.

\begin{theorem}\label{thm:testing-labelled-BCMRT}
Assume that one observes a time-labelled BCMRT. Then, the following holds:
\begin{enumerate}
    \item[$(i)$] If $0\leq q_0^{(n)}<q_1^{(n)}\leq 1/2$ and 
    \[
    \left(q_1^{(n)}-q_0^{(n)}\right)^2\left(1-q_0^{(n)}-q_1^{(n)}\right)^2\gg \frac{q_1^{(n)}}{\log n}\, ,
    \]
    then
    \[
    \bR^\star_{q_0^{(n)},q_1^{(n)}}\underset{ n\to \infty}{\;\longrightarrow\;} 0\, .
    \]
    In particular, if $q_1^{(n)}=1/2$ and $\frac{1}{2}-q_0^{(n)}\gg \frac{1}{(\log n)^{1/4}}$, then $\bR^\star_{q_0^{(n)},1/2}\underset{ n\to \infty}{\;\longrightarrow\;} 0$.
    \item[$(ii)$]  If $\frac{1}{2}-q_0^{(n)}\ll \frac{1}{\sqrt{n}}$, then
    \[
    \bR^\star_{q_0^{(n)},1/2}\underset{ n\to \infty}{\;\longrightarrow\;} \frac{1}{2}\, .
    \]
\end{enumerate}
\end{theorem}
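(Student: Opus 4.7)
My plan is to reuse the statistic $Z_n$ introduced just above. Since $\E_q[Z_n]\sim 2q(1-q)\log n$ and $q\mapsto q(1-q)$ is increasing on $[0,1/2]$, the gap between the two means
\begin{equation*}
\Delta_n \,=\, \E_{q_1^{(n)}}[Z_n] - \E_{q_0^{(n)}}[Z_n] \,\sim\, 2\bigl(q_1^{(n)}-q_0^{(n)}\bigr)\bigl(1-q_0^{(n)}-q_1^{(n)}\bigr)\log n
\end{equation*}
is strictly positive, while the variance satisfies $\Var_{q_i^{(n)}}(Z_n)\sim 2q_i^{(n)}(1-q_i^{(n)})\log n \le 2q_1^{(n)}\log n$ for $i\in\{0,1\}$. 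I would then threshold at $\tau_n = \tfrac{1}{2}(\E_{q_0^{(n)}}[Z_n] + \E_{q_1^{(n)}}[Z_n])$ and use the test $\hat{\varphi}_n = \ind\{Z_n > \tau_n\}$. By Chebyshev's inequality, each of the two error probabilities is at most $4\Var_{q_i^{(n)}}(Z_n)/\Delta_n^2$, and a direct computation shows this ratio is $o(1)$ precisely when $(q_1^{(n)}-q_0^{(n)})^2(1-q_0^{(n)}-q_1^{(n)})^2 \gg q_1^{(n)}/\log n$. The corollary for $q_1^{(n)}=1/2$ is then immediate since $(q_1^{(n)}-q_0^{(n)})(1-q_0^{(n)}-q_1^{(n)}) = (1/2-q_0^{(n)})^2$ in that case.

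\textbf{Part (ii): a total-variation lower bound via Pinsker.}
Using the standard identity $\bR^\star = \tfrac{1}{2}(1-d_{\tv}(\P_{q_0^{(n)}},\P_{1/2}))$, the task reduces to proving $d_{\tv}\to 0$. Let $\P^{\mathrm{full}}_q$ denote the BCMRT distribution together with the $A/B$-types. Since the observed distribution is obtained from $\P^{\mathrm{full}}_q$ by erasing the types, the data-processing inequality gives $d_{\tv}(\P_{q_0^{(n)}},\P_{1/2}) \le d_{\tv}(\P^{\mathrm{full}}_{q_0^{(n)}},\P^{\mathrm{full}}_{1/2})$, and Pinsker's inequality further bounds this by $\sqrt{\tfrac{1}{2}\,KL(\P^{\mathrm{full}}_{q_0^{(n)}}\,\|\,\P^{\mathrm{full}}_{1/2})}$. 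I would then apply the chain rule for KL: given $\mathcal{F}_{k-1}$, the attachments of $(k,A)$ and $(k,B)$ are independent, so the conditional KL splits into two equal contributions. For the parent of $(k,A)$, the law under $q_0^{(n)}$ puts mass $(1-q_0^{(n)})/(k-1)$ on each of the $k-1$ $A$-nodes and $q_0^{(n)}/(k-1)$ on each of the $k-1$ $B$-nodes, whereas under $q=1/2$ it is uniform on all $2(k-1)$ existing vertices. A one-line computation yields
\begin{equation*}
KL^{(A)}_k \,=\, (1-q_0^{(n)})\log\bigl(2(1-q_0^{(n)})\bigr) + q_0^{(n)}\log\bigl(2q_0^{(n)}\bigr) \,=\, \log 2 - H(q_0^{(n)}),
\end{equation*}
with $H$ the binary entropy; crucially this does not depend on $\mathcal{F}_{k-1}$, thanks to the balanced structure of the model (there are always exactly $k-1$ nodes of each type at step $k$). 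Hence $KL(\P^{\mathrm{full}}_{q_0^{(n)}}\,\|\,\P^{\mathrm{full}}_{1/2}) = 2(n-1)(\log 2 - H(q_0^{(n)}))$, and Taylor expanding $H$ around $1/2$ gives $\log 2 - H(q) = 2(1/2-q)^2 + O((1/2-q)^3)$. The KL is therefore of order $n(1/2-q_0^{(n)})^2$, which is $o(1)$ under the assumption $1/2-q_0^{(n)}\ll n^{-1/2}$; consequently $d_{\tv}\to 0$ and $\bR^\star\to 1/2$.

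\textbf{Main obstacle.}
Part (i) is essentially routine given the moments of $Z_n$ already at hand. The main subtlety in part (ii) is to avoid a direct computation of $\P_q(T)$, which would entail summing over the $2^{n-1}$ valid type assignments of the observed tree $T$. The data-processing step lifts the problem to the type-augmented process, where the likelihood ratio is a clean product, and the balanced structure of the BCMRT makes the per-step conditional KL history-independent, reducing the entire KL computation to a single-step calculation.
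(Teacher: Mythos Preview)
Your part~(i) is essentially the paper's argument: same statistic $Z_n$, same midpoint threshold, same Chebyshev bound leading to the same condition. (A small point: since $q_0^{(n)},q_1^{(n)}$ vary with $n$, it is safer to use the exact identities $\E_q[Z_n]=2q(1-q)H_{n-1}$ and $\Var_q(Z_n)\le 2q(1-q)H_{n-1}$ rather than $\sim$, but this changes nothing.)

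Part~(ii) is correct but takes a genuinely different route from the paper. The paper bounds $d_{\tv}$ via the $\chi^2$-divergence of the \emph{observed} (type-erased) laws: it writes $P_q(T)$ as a sum over valid colorings, expands $\sum_T P_q(T)^2/P(T)$ as an average of $\E_q[((1-q)/q)^{M_\cT^\sigma}]$ over colorings $\sigma$, and then uses a stochastic-domination argument ($M_\cT^\sigma$ is dominated by a Binomial$(2(n-1),1-q)$) to obtain $(1+\varepsilon_n^2)^{2(n-1)}$ with $\varepsilon_n=1-2q$. You instead lift to the type-augmented process by data processing and apply Pinsker plus the chain rule; the balanced structure makes the per-step conditional KL equal to $\log 2-H(q_0^{(n)})$ regardless of history, so the total KL is exactly $2(n-1)(\log 2-H(q_0^{(n)}))\sim 4(n-1)(1/2-q_0^{(n)})^2$. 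Both arguments yield the same threshold $1/2-q_0^{(n)}\ll n^{-1/2}$. Your argument is shorter and sidesteps the coloring sum and the stochastic-domination step entirely. On the other hand, the paper's framework operates directly on the observed distribution and could in principle be sharpened (by computing the law of $M_\cT^\sigma$ for a uniform $\sigma$ rather than dominating it), whereas your full-process KL is intrinsically $\Theta(n(1/2-q)^2)$: any improvement beyond $n^{-1/2}$ would require abandoning the data-processing step and working with the type-erased law.
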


\begin{proof}[Proof of Theorem~\ref{thm:testing-labelled-BCMRT}]
To alleviate notation, we write $q_0$ and $q_1$ instead of $q_0^{(n)}$ and $q_1^{(n)}$.
To prove the first statement $(i)$, let $0\leq q_0<q_1 \leq 1/2$, and consider the test $\varphi_n$ given by
\[
\varphi_n=\ind_{\left\{ Z_n >\frac{\E_{q_0}[Z_n]+\E_{q_1}[Z_n]}{2}\right\}}\, ,
\]
with $Z_n$ defined in~\eqref{eq:def-Z_n}. By Chebyshev's Inequality and the fact that $\Var_q(Z_n)\leq \E_q[Z_n]$, we have
\begin{align*}
    \bR_{q_0,q_1}(\varphi_n) &=\,\frac{1}{2} \P_{q_0}\left(Z_n-\E_{q_0}[Z_n]>\frac{\E_{q_1}[Z_n]-\E_{q_0}[Z_n]}{2}\right) \\
    &\hspace{2.5cm}+\frac{1}{2}\P_{q_1}\left(Z_n-\E_{q_1}[Z_n]\leq \frac{\E_{q_1}[Z_n]-\E_{q_0}[Z_n]}{2}\right)\\
&\leq\, \frac{2(\E_{q_0}(Z_n)+\E_{q_1}(Z_n))}{(\E_{q_1}[Z_n]-\E_{q_0}[Z_n])^2}\\
    &=\, \frac{q_0(1-q_0)+q_1(1-q_1)}{\left(q_1(1-q_1)-q_0(1-q_0)\right)^2H_{n-1}}\\
    &\leq\, \frac{2q_1}{(q_1-q_0)^2(1-q_1-q_0)^2H_{n-1}}\, ,
\end{align*}
where $H_{n-1}=\sum_{k=1}^{n-1} \frac{1}{k}$. We see that, if $(q_1-q_0)^2(1-q_0-q_1)^2\gg \frac{q_1}{\log n}$, then $\bR_{q_0,q_1}(\varphi_n) \underset{ n\to \infty}{\;\longrightarrow\;} 0$. Note that when $q_1=1/2$, then the condition becomes $1/2-q_0\gg (\log n)^{-1/4}$, whereas when $q_0=0$, it becomes $q_1\gg (\log n)^{-1}$, and when $q_0$ and $q_1$ are bounded away from $0$ and $1/2$, it reads $q_1-q_0\gg (\log n)^{-1/2}$. Finally, when $q_0,q_1\rightarrow 0$, it amounts to $q_1-q_0\gg q_1^{1/2}(\log n)^{-1/2}$.

Let us now move to the proof of statement $(ii)$. Let $T$ be a time-labelled balanced recursive tree of size $2n$. Let $\cC$ be the set of valid colorings of $T$, i.e., colorings with colors $A$ and $B$ such that each pair of nodes with the same time label is colored with both $A$ and $B$. We consider that the two different nodes labelled $i$ get an arbitrary ordering. In this way, a coloring is an $n$-tuple of zeros and ones, where ones correspond to ordered pairs colored $(A,B)$, and zeros to ordered pairs colored $(B,A)$. For $\sigma\in\cC$, we denote by $M_T^\sigma$ the number of monochromatic edges in the tree $T$ colored with $\sigma$. 

Each coloring in $\mathcal{C}$ corresponds to a realisation of BCMRT$(q)$ and the probability of the realisation is equal to $(1-q)^{M_T^\sigma}q^{2(n-1)-M_T^\sigma}\prod_{k=2}^n(k-1)^{-2}$. Notice that different colorings might correspond to the same realisation. For instance, consider the two colorings $(1,0)$ and $(1,1)$ applied to a tree of four nodes where the two nodes of label 2 are both connected to the same node of label 1. In order not to double count realisations, we need to investigate how many different colorings correspond to the same realisation. To that end, assume two such different colorings $c_1,c_2$. Then there exists a pair of nodes labelled $i$, such that $c_1,c_2$ differ in the $i$-th position. Pick the smallest such $i$. The descendant sets of these two nodes are the same and they all have different color in $c_1$ than in $c_2$, otherwise the realisation is not the same. Moreover, by minimality of $i$, the two nodes labelled $i$ must connect to the same node (except if $i=1$), otherwise the realisation is not the same. Erase the subtrees hanging from the two nodes labelled $i$ and continue recursively until there is no such pair of nodes.
Conversely, assume an arbitrary coloring and find, if they exist, two nodes with the same label that connect to the same node (or they are the root nodes) and their descendants form the same tree, as time-labelled rooted trees, and flip all the colors in these two subtrees. Then we end up with two different colorings that correspond to the same realisation. Let $\rm{ec}(T)$ be the number of such pairs of nodes. We conclude that, given a coloring $c$, the number of colorings that give the same realisation as $c$ is equal to $2^{\rm{ec}(T)}$, which depends only on the time-labelled tree and not on $c$. Then, the probability $P_q(T)$ to generate $T$ under BCMRT$(q)$ is
\[
P_q(T)=\sum_{\sigma\in\cC} \frac{1}{2^{\rm{ec}(T)}}(1-q)^{M_T^\sigma}q^{2(n-1)-M_T^\sigma}\prod_{k=2}^n \frac{1}{(k-1)^2}\, .
\]

In what follows, we write for simplicity $P$ instead of $P_{1/2}$.
The possibility of distinguishing $P_q$ from $P$ is determined by the total-variation distance between the two probability distributions defined by
\[
\|P_q-P\|_\tv=\frac{1}{2}\sum_T |P_q(T)-P(T)|\, .
\]
More precisely, it is a well-known fact (see for instance~\cite{lugosi2017lectures}, page 5) that
\[
\bR^\star_{q,1/2}=\frac{1}{2}\left( 1-\|P_q-P\|_\tv\right)\, .
\]
Thus our goal reduces to showing that, if $\frac{1}{2}-q\ll n^{-1/2}$, then $\|P_q-P\|_\tv \underset{ n\to \infty}{\;\longrightarrow\;}0$. By Cauchy-Schwarz Inequality, we have
\[
\|P_q-P\|_\tv\,\leq\,\frac{1}{2}\sqrt{\sum_T \frac{P_q(T)^2}{P(T)} -1}\, .
\]
Note that the latter expression is also known as $\chi ^2$-divergence. Moreover, 
\begin{align*}
\sum_T \frac{P_q(T)^2}{P(T)} \,&=\,(2q)^{2(n-1)}\sum_T P_q(T) \sum_{\sigma\in\cC}\frac{1}{2^{n}} \left(\frac{1-q}{q}\right)^{M_T^\sigma}\\
&=\, (2q)^{2(n-1)}\sum_{\sigma\in\cC}\frac{1}{2^{n}}\E\left[ \left(\frac{1-q}{q}\right)^{M_\cT^{\sigma}}\right]\, ,
\end{align*}
where the expectation is over $\cT\sim P_q$, and the $2^n$ in the denominator is the number of all possible colorings. Next, note that for any coloring $\sigma$, the random variable $M_\cT^\sigma$ is stochastically dominated by a sum of $2(n-1)$ independent Bernoulli random variables with parameter $1-q$. Indeed, the probability for each new node to attach to a node that has the same color in $\sigma$ is always less than $1-q$ (if $\sigma$ corresponds to the true coloring, then there is equality).
Hence, for all $\sigma\in\cC$, we have
\[
\E\left[ \left(\frac{1-q}{q}\right)^{M_\cT^{\sigma}}\right]\leq \left(\frac{(1-q)^2}{q}+q\right)^{2(n-1)}\, .
\]
For $q=\frac{1-\varepsilon_n}{2}$ with $\varepsilon_n\in (0,1)$, we obtain
\[
\sum_T \frac{P_q(T)^2}{P(T)}\leq \left(\frac{(1+\varepsilon_n)^2}{2}+\frac{(1-\varepsilon_n)^2}{2}\right)^{2(n-1)}=(1+\varepsilon_n^2)^{2(n-1)}\, ,
\]
which tends to $1$ as soon as $\varepsilon_n\ll n^{-1/2}$, establishing Theorem~\ref{thm:testing-labelled-BCMRT}, $(ii)$.
\end{proof}

\subsection{Clustering}

Let $T$ be a time-labelled BCMRT of size $2n$, and let $\pi$ be the true coloring used to generate $T$. First note that the number of monochromatic edges in $T$ with respect to $\pi$ can be written
\[
M_T^\pi=\sum_{i=2}^{n}(\xi_i+\xi'_i)\, ,
\]
where $\xi_i,\xi'_i$ are i.i.d.\ Bernoulli random variables with parameter $1-q$. In particular, $\E\left[M_T^\pi\right]=2(1-q)(n-1)$ and, by Hoeffding inequality, we have
\[
\P\left(|M_T^\pi-2(1-q)(n-1)|\geq n^{2/3}\right)=e^{-\Omega(n^{1/3})}\, .
\]
We will show that for $q$ small enough, with probability tending to $1$ as $n\to +\infty$, any coloring of $T$ which has a number of monochromatic edges larger than $s_n=2(1-q)(n-1)-n^{2/3}
$ must be correlated with $\pi$. This gives an exponential time clustering algorithm which performs better than chance: simply go through all colorings and output the first one for which $M_T^\sigma\geq s_n$. Let us mention that this approach was used by J.~Banks, C.~Moore, J.~Neeman, and P.~Netrapalli in~\cite{banks2016information}, in the context of the Stochastic Block Model.

For a given coloring $\sigma$ of $T$, let $m(\sigma)$ be the number of node pairs (with the same time label) which are correctly colored:
\[
m(\sigma)=\sum_{i=1}^{n}\ind_{\sigma_i=\pi_i}\, .
\]
\begin{theorem}\label{thm:clustering}
  There exists an absolute positive constant $\varepsilon>0$, such that, for $q$ small enough, the probability that there exists $\sigma$ satisfying both 
    \[
    \left|m(\sigma)-\frac{n}{2}\right|\leq \varepsilon n \qquad \text{ and }\qquad M_T^\sigma\geq 2(1-q)(n-1)-n^{2/3}
    \]
    tends to $0$ as $n\to +\infty$ (in fact, the proof guarantees at least the range $q\leq 1/50$).
\end{theorem}
\begin{proof}[Proof of Proposition~\ref{thm:clustering}]
Let $\sigma$ be a given coloring with $1\leq m\leq n$ correctly colored pairs and assume that $\left|m-\frac{n}{2}\right|\leq \varepsilon n$ for some $\varepsilon>0$ that will be specified later. Without loss of generality, we further assume that
\[
 \frac{n}{2}\leq m\leq \frac{n}{2}+ \varepsilon n \, ,
\]
since the other situation can be treated similarly. Let $(c_1,\dots,c_{n})\in\{0,1\}^{n}$ be the binary vector indicating the location of the correctly colored pairs and define
\[
\omega_i=\frac{\sum_{j=1}^ic_j}{i}\, \cdot 
\]
Here, we view $\sigma$ and $\pi$ as fixed assignment of colors for each node pair, defined before the tree is generated. We then generate $T$ according to the coloring $\pi$ and look at the number of monochromatic edges it induces for $\sigma$. For all $\lambda\geq 0$, we have
\[
    \ln\E\left[ e^{\lambda M_T^\sigma}\right]= 2\sum_{i=2}^{n}\left\{c_i\ln(p_ie^\lambda+1-p_i)+(1-c_i)\ln((1-p_i)e^\lambda+p_i)\right\}\, ,
\]
where
\[
p_i=(1-q)\omega_{i-1}+q(1-\omega_{i-1})\, .
\]
Indeed, the number of monochromatic edges added at step $i$ is a sum of two independent Bernoulli random variables with parameter $p_i$ when $c_i=1$, and $1-p_i$ when $c_i=0$. By Jensen inequality, we have
\begin{equation}\label{eq:bound-laplace}
    \ln\E\left[ e^{\lambda M_T^\sigma}\right]\leq 2(n-1)\ln(pe^\lambda+1-p)\, ,
\end{equation}
where 
\begin{align*}
p& =\frac{1}{n-1}\sum_{i=2}^{n}\left\{c_ip_i+(1-c_i)(1-p_i)\right\}\\
&= \frac{1-q}{n-1}\sum_{i=2}^{n}\left\{c_i\omega_{i-1}+(1-c_i)(1-\omega_{i-1})\right\}+\frac{q}{n-1}\sum_{i=2}^{n}\left\{c_i(1-\omega_{i-1})+(1-c_i)\omega_{i-1}\right\}\, .
\end{align*}
We see that the value of $p$ depends on the configuration of ones in the vector $(c_1,\dots,c_{n})$. Since $p$ is a weighted average between $1-q$ and $q$, and since $1-q\geq q$, the configuration that maximizes the value of $p$ is the one that maximizes the weight put on $1-q$, i.e.,
\[
\frac{1}{n-1}\sum_{i=2}^{n}\left\{c_i\omega_{i-1}+(1-c_i)(1-\omega_{i-1})\right\}\, ,
\]
over all vectors $c\in\{0,1\}^n$ with $m$ ones and $n-m$ zeros. We claim that this quantity is maximized at the vector $c$ with all the $m$ ones at the beginning, followed by $n-m$ zeros. This is the content of Lemma~\ref{lem:worst-case-p}, that can be found at the end of this section. Hence, $p$ is always less than
 \[
\frac{1-q}{n-1}\left(m-1+\sum_{i=m+1}^{n}\left(1-\frac{m}{i-1}\right)\right)+\frac{q}{n-1}\sum_{i=m+1}^{n}\frac{m}{i-1}\, .
\]
Next, note that
\[
\sum_{i=m+1}^{n}\left(1-\frac{m}{i-1}\right)=\sum_{i=1}^{n-m}\frac{i-1}{i+m-1}\leq \sum_{i=1}^{n-m}\frac{i}{i+m}\, \cdot 
\]
Using that $m\geq n-m$ and that $n-m$ tends to $+\infty$ with $n$, we have
\[
\frac{1}{n-m}\sum_{i=1}^{n-m}\frac{i}{i+m}\leq \frac{1}{n-m}\sum_{i=1}^{n-m}\frac{i}{i +n-m}\;\underset{n\to +\infty}{\longrightarrow}\; \int_0^1 \frac{x}{x+1}dx=1-\ln(2)\, .
\]
Hence, using that $\frac{m-1}{n-1}\leq \frac{1}{2}+\varepsilon$ and that $\limsup_{n\to\infty} \frac{n-m}{n-1}\leq \frac{1}{2}$, we have for $n$ large enough,
\[
p\leq (1-q)\left(\frac{1}{2}+2\varepsilon+\frac{1}{2}(1-\ln(2))\right)+\frac{q\ln(2)}{2}\, ,
\]
where the additional $\varepsilon$ compensates for the asymptotics. Rearranging, we get
\[
p\leq \bar{p}=1+2\varepsilon-\frac{\ln(2)}{2}-q(1-\ln(2))\, .
\]
Coming back to~\eqref{eq:bound-laplace} and using a Chernoff bound, we have
\[
\P\left(M_T^\sigma\geq 2(1-q)(n-1)-n^{2/3}\right)\leq e^{-2(n-1)\sup_{\lambda\geq 0} \left\{(1-q)\lambda-\ln(\bar{p}e^\lambda+1-\bar{p})-\frac{\lambda n^{2/3}}{2(n-1)}\right\}}\, .
\]
Taking
\[
\lambda=\ln\left(\frac{(1-\bar{p})(1-q)}{\bar{p}q}\right)\, ,
\]
we find 
\[
\P\left(M_T^\sigma\geq 2(1-q)(n-1)-n^{2/3}\right)\leq  e^{-2(n-1)\left((1-q)\ln\left(\frac{(1-\bar{p})(1-q)}{\bar{p}q}\right)-\ln\left(\frac{1-\bar{p}}{q}\right)+o(1)\right)}\, .
\]
Finally, taking a union bound and using that the number of colorings $\sigma$ with $\left|m(\sigma)-\frac{n}{2}\right|\leq \varepsilon n $ is less than $2^{n}$, we see that the proof can be concluded if we can find $q$ and $\varepsilon$ such that
\[
(1-q)\ln\left(\frac{(1-\bar{p})(1-q)}{\bar{p}q}\right)-\ln\left(\frac{1-\bar{p}}{q}\right)-\frac{\ln(2)}{2}>0\, .
\]
This holds for $\varepsilon=10^{-4}$ and $q\leq \frac{1}{50}$.
\end{proof}

\begin{lemma}\label{lem:worst-case-p}
    For $1\leq m\leq n$, let $\cX_m$ be the subset of vectors $x\in \{0,1\}^n$ with $m$ ones. For $x\in\cX_m$, let
    \[
    F(x)=\sum_{i=2}^{n} x_i\omega_{i-1}+(1-x_i)(1-\omega_{i-1})\, ,
    \]
    where $\omega_i=\frac{1}{i}\sum_{j=1}^i x_j$. Then, if $m\geq n/2$, the maximum of $F$ over $\cX_m$ is attained for the vector consisting of $m$ ones at the beginning, followed by $n-m$ zeros, while if $m\leq n/2$, it is attained for the vector with zeros first. 
\end{lemma}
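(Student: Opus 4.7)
The plan is to pass to $\pm 1$ variables $y_i = 2 x_i - 1$, exploit the identity $x_p x_q + (1-x_p)(1-x_q) = (1 + y_p y_q)/2$, and reduce the problem to a clean statement about squared partial sums of a $\pm 1$ walk. Writing each summand of $F$ as $\frac{1}{i-1}\sum_{p<i}\bigl[x_i x_p + (1-x_i)(1-x_p)\bigr]$ and interchanging the order of summation yields
\begin{equation*}
F(x) \;=\; \sum_{1\le p<q\le n}\frac{1 + y_p y_q}{2(q-1)} \;=\; \frac{n-1}{2} + \frac{1}{2}T(y),\qquad T(y) := \sum_{1\le p<q\le n}\frac{y_p y_q}{q-1}.
\end{equation*}
The constraint $\sum_i x_i = m$ becomes $\sum_i y_i = s := 2m - n$, and the ones-first vector $x^\star$ of the lemma corresponds to $y^\star = (+1,\dots,+1,-1,\dots,-1)$ (with $m$ pluses). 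The symmetry $F(x) = F(\mathbf{1} - x)$, which follows at once from the invariance of $y_p y_q$ under $y\mapsto -y$, reduces the case $m\le n/2$ to $m\ge n/2$, so from now on I would assume $s\ge 0$ and aim to prove $T(y)\le T(y^\star)$.

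Setting $S_q := y_1 + \cdots + y_q$ (so $S_0 = 0$, $S_n = s$), the relation $y_q^2 = 1$ gives $y_q S_{q-1} = \tfrac12(S_q^2 - S_{q-1}^2 - 1)$, hence $T(y) = \tfrac12\sum_{q=2}^n (S_q^2 - S_{q-1}^2)/(q-1) - H_{n-1}/2$. A standard Abel rearrangement — each $S_q^2$ with $2\le q\le n-1$ picks up the coefficient $\tfrac{1}{q-1} - \tfrac{1}{q} = \tfrac{1}{q(q-1)}$, while the boundary terms contribute $-S_1^2 + S_n^2/(n-1) = -1 + s^2/(n-1)$ — yields the key representation
\begin{equation*}
T(y) \;=\; \frac{1}{2}\sum_{q=2}^{n-1}\frac{S_q^2}{q(q-1)} \;+\; \frac{s^2}{2(n-1)} - \frac{1 + H_{n-1}}{2}.
\end{equation*}
The last two terms depend only on $(n,s)$, both fixed, so maximizing $T(y)$ reduces to maximizing the non-negatively weighted functional $\sum_{q=2}^{n-1} S_q^2/(q(q-1))$ over all $\pm 1$ walks with prescribed endpoint $S_n = s$.

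The endgame is pointwise domination. For any such walk, $|S_q|\le q$ and $|S_q|\le s + (n-q)$ (the remaining $n-q$ steps can shift $S_q$ by at most $n-q$), so for $s\ge 0$ one has $|S_q|\le \min(q,\,n-q+s)$ at every $q$. The ones-first walk $y^\star$ attains this bound simultaneously at every $q$: it rises to $S_m^\star = m = (n+s)/2$ along the first $m$ steps and then descends to $s$, giving $S_q^\star = \min(q,\,2m-q) = \min(q,\,n-q+s)$. Positivity of the weights $1/(q(q-1))$ then upgrades the pointwise inequality $S_q^2 \le (S_q^\star)^2$ into $\sum_q S_q^2/(q(q-1)) \le \sum_q (S_q^\star)^2/(q(q-1))$, hence $T(y)\le T(y^\star)$ and $F(x)\le F(x^\star)$, as desired. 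The one genuinely non-routine step is the Abel identity above: a direct swap argument on $F$ fails because interchanging a $0$ at position $i$ with a later $1$ can either increase or decrease $F$ depending on whether $\omega_{i-1}$ exceeds $1/2$, so the global rewriting in terms of $S_q^2$ is what makes the optimum transparent.
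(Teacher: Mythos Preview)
Your proof is correct and takes a genuinely different route from the paper's.

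The paper proceeds by an exchange argument: given any $x\in\cX_m$ with $m\ge n/2$, it looks at the initial pattern $(a^i,b^j,a,\dots)$ and shows, by direct computation of $F(x')-F(x)$, that either pulling the stray $a$ to position $i+1$ (when $i+1\ge j$) or moving the block of $b$'s to the front (when $j\ge i+1$) does not decrease $F$. Iterating these block moves terminates at the sorted vector. The argument is elementary but requires two separate case computations and a somewhat delicate monotonicity check in each case.

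Your approach is algebraic: the change of variables $y_i=2x_i-1$ and the identity $x_px_q+(1-x_p)(1-x_q)=(1+y_py_q)/2$ turn $F$ into a constant plus $\tfrac12\sum_{q=2}^n y_qS_{q-1}/(q-1)$, and the Abel rearrangement converts this into a nonnegative combination of the squared partial sums $S_q^2$ plus terms depending only on $(n,s)$. Maximizing then reduces to maximizing each $|S_q|$ pointwise, and the triangle bounds $|S_q|\le q$, $|S_q|\le |s|+(n-q)$ are simultaneously saturated by the ones-first walk. This is cleaner: there is no case analysis, and the structure of the maximizer is transparent once the problem is recast as a $\pm1$ walk. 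Your closing remark is apt --- single-bit swaps do not give a monotone path to the optimum, which is precisely why the paper's argument has to swap blocks, while your rewriting sidesteps the issue entirely.
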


\begin{proof}[Proof of Lemma~\ref{lem:worst-case-p}]
Suppose that $m\geq n/2$ (the case $m\leq n/2$ is completely symmetric). We will show that, starting from any configuration $x\in\cX_m$, there is a sequence of moves that can only increase the value of $F$ and that arrives at the configuration $(1,\dots,1,0,\dots,0)$. Assume that, for $a\in\{0,1\}$, and for $i\geq 1$, $j\geq 1$ such that $i+j+1\leq n$, the current configuration $x$ starts with $i$ bits equal to $a$, followed by $j$ bits equal to $b=1-a$, followed by one bit equal to $a$. The remainder of the vector is arbitrary, provided the condition $x\in\cX_m$ is satisfied:
\[
x=(\underbrace{a,\dots,a}_{i},\underbrace{b,\dots,b}_{j},a,x_{i+j+2},\dots,x_n)\, .
\]
We claim that if $i+1\geq j$, then the value of $F$ is increased by moving the bit $a$ from position $i+j+1$ to position $1$, while if $j\geq i+1$, then the value of $F$ is increased by moving the blocks of $b$ at the beginning. Let us fist consider the case $i+1\geq j$, and let
\[
x'=(\underbrace{a,a,\dots,a}_{i+1},\underbrace{b,\dots,b}_{j},x_{i+j+2},\dots,x_n)\, .
\]
Then,
\begin{align*}
    F(x')-F(x)&=\left\{i+\sum_{k=1}^{j-1}\frac{k}{i+k+1}\right\}-\left\{i-1+\sum_{k=1}^{j-1}\frac{k}{i+k}+\frac{i}{i+j}\right\}\\
    &= \frac{j}{i+j}-\sum_{k=1}^{j-1}\frac{k}{(i+k)(i+k+1)}\, \cdot
\end{align*}
Noting that the function $x\mapsto \frac{x}{(i+x)(i+x+1)}$ is increasing for $0\leq x\leq i$ and using that $j-1\leq i$, we have
\[
    F(x')-F(x)\geq  \frac{j}{i+j}-\frac{(j-1)^2}{(i+j-1)(i+j)}\geq 0\, \cdot
\]
Let us now turn to the case $j\geq i+1$, and let
\[
x''=(\underbrace{b,\dots,b}_{j},\underbrace{a,\dots,a,a}_{i+1},x_{i+j+2},\dots,x_n)\, .
\]
We have
\begin{align*}
    F(x'')-F(x)&=\left\{j-1+\sum_{k=1}^{i}\frac{k}{j+k}\right\}-\left\{i-1+\sum_{k=1}^{j-1}\frac{k}{i+k}+\frac{i}{i+j}\right\}\\
    &= \frac{j}{i+j}-\sum_{k=1}^i\frac{j}{j+k}+\sum_{k=1}^{j-1}\frac{i}{i+k}\\      
    &\geq \frac{j}{i+j}-\sum_{k=1}^i\frac{k(j-i)}{(i+k)(j+k)}\, \cdot 
\end{align*}
Noting that the function $x\mapsto \frac{x}{(i+x)(j+x)}$ is increasing for $0\leq x\leq \sqrt{ij}$ and using that $j\geq i+1$, we have
\[
    F(x')-F(x)\geq  \frac{j}{i+j}-\frac{i(j-i)}{2i(j+i)}\geq 0\, \cdot
\]
To conclude, observe that when $m> n/2$, iterating those transitions (choosing $x'$ or $x''$ depending on the current situation) will end at $(1,\dots,1,0,\dots,0)$. If $m=n/2$, it might also end at $(0,\dots,0,1,\dots,1)$ but then for both vectors the value of $F$ is the same.
\end{proof}

\begin{remark}
    The task that our clustering algorithm performs is closely related to the task of finding a fair bisection of the nodes such that the number of cut edges is minimised (by \textit{fair} it is meant that neither of the parts contains two nodes with the same label). To see that, notice that a valid coloring of a BCMRT tree with a large number of monochromatic edges corresponds in a natural way to a fair bisection of the nodes such that the number of cut edges is small. Therefore, with probability tending to~$1$, finding a minimum fair bisection of the nodes satisfies the condition of our clustering algorithm (for which the cut size does not need to be optimal). The problem of minimum fair bisection was proved to be NP-hard in~\cite{casel_et_al:LIPIcs.FORC.2023.9} for general trees with two nodes for each label, by using a reduction from the \textit{vertex cover} problem in cubic graphs, that was developed in~\cite{bonsma2006complexity}. In the Appendix, we show that the minimum fair bisection remains NP-hard for recursive trees.
\end{remark}

\section{Testing in rooted unlabelled BCMRT}\label{sec:rooted}

We now remove the time labels, except for the root-edge, i.e., we observe an unlabelled tree on $2n$ nodes, with one distinguished edge connecting the two initial nodes. In this context, we establish the following result.

\begin{theorem}\label{thm:rooted-unlabelled-BCMRT}
Assume that one observes a rooted unlabelled {\rm BCMRT}. Then, for all $0\leq q_0<q_1\leq 1/2$, we have
\[
\sup_{n\geq 2} \bR^\star_{q_0,q_1} <\frac{1}{2}\, .
\]
\end{theorem}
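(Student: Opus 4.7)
The plan is to exhibit a distinguishing statistic based on the sizes of the two subtrees meeting at the observed root edge. I would set
\[
S_n \;=\; |T_n^+|\cdot|T_n^-| \;=\; 2n|T_n^+| - |T_n^+|^2,
\]
which is measurable with respect to the rooted unlabelled tree. The $A\leftrightarrow B$ symmetry of the model implies $|T_n^+|$ and $|T_n^-|$ have the same law, hence $\E_q|T_n^+|=n$ for every $q$, so all the $q$-dependence lies in the second moment $\E_q|T_n^+|^2$, which is what I would compute.

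Since $|T_n^+|$ is not Markov on its own, I would enlarge the state by the type imbalance inside $T_n^+$: set $V_n=|T_n^+|$ and $U_n=a_n^+-b_n^+$, where $a_n^+$ and $b_n^+$ count type-$A$ and type-$B$ nodes of $T_n^+$. A direct calculation shows that, conditional on $\mathcal{F}_n$, the two new vertices at step $n+1$ attach inside $T_n^+$ independently according to Bernoullis with parameters
\[
p_A = \frac{V_n+(1-2q)U_n}{2n},\qquad p_B = \frac{V_n-(1-2q)U_n}{2n},
\]
and $V_{n+1}=V_n+\xi^A+\xi^B$, $U_{n+1}=U_n+\xi^A-\xi^B$. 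Expanding squares and taking expectations produces a closed linear system for $M_n=\E V_n^2$, $L_n=\E U_n^2$, $K_n=\E[U_nV_n]$ and $\E U_n$. Rescaling via $s_n:=\E_q S_n/n^2 = 2-M_n/n^2$, the recursion reduces to $s_{n+1}-s_n=O(1/n^2)$ with a right-hand side involving $L_n/n^2$; an asymptotic analysis then yields
\[
\E_q[S_n] \;=\; \alpha(q)\,n^2 + o(n^2)
\]
for an explicit $\alpha$. The endpoints are transparent: at $q=0$ the types are perfectly separated so $V_n=n$ a.s.\ and $\alpha(0)=1$, while at $q=1/2$ attachment is entirely unbiased, $U_n$ ceases to amplify, $L_n=O(n)$, and one computes $\alpha(1/2)<1$. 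Strict monotonicity of $\alpha$ on $[0,1/2]$ is then established by a direct comparison of the recursions for two parameters $q<q'$.

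For the conclusion, I use that $0\le S_n\le n^2$ by AM-GM applied to $|T_n^+|+|T_n^-|=2n$, so $S_n/n^2$ takes values in $[0,1]$. The elementary dual bound $|\E_0 f-\E_1 f|\le d_{\tv}(P_0,P_1)$ for $f\in[0,1]$ gives
\[
d_{\tv}\bigl(P_{q_0}^{(n)},P_{q_1}^{(n)}\bigr) \;\geq\; \frac{|\E_{q_0}S_n - \E_{q_1}S_n|}{n^2} \;\underset{n\to\infty}{\longrightarrow}\; |\alpha(q_0)-\alpha(q_1)| \;>\; 0,
\]
and since $\bR^\star_{q_0,q_1}=(1-d_{\tv})/2$, this bounds $\limsup_n \bR^\star_{q_0,q_1}$ strictly below $1/2$. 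To upgrade the limsup to the supremum, note that for each fixed $n\ge 2$ the laws $P_{q_0}^{(n)}$ and $P_{q_1}^{(n)}$ are genuinely distinct (e.g.\ because $\E_q S_n$ is a non-constant polynomial in $q$, or because $P_0$ is supported only on balanced trees while $P_{q_1}$ charges unbalanced ones), hence $d_{\tv}>0$ and $\bR^\star_{q_0,q_1}<1/2$ at each $n$; combining with the asymptotic bound yields the uniform statement.

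The main obstacle is the asymptotic solution of the coupled recursion for $M_n$ and $L_n$. The variance $L_n=\E U_n^2$ exhibits a phase transition at $\rho=1-2q=1/2$: it grows like $n^{2\rho}$ for $\rho>1/2$ (driven by the multiplicative factor $1+2\rho/n$) but only like $n$ for $\rho<1/2$ (driven by the additive noise). Tracking this growth rate precisely enough to extract the coefficient $\alpha(q)$ from the $M_n$-recursion, and then to verify its strict monotonicity in $q$, is the technical heart of the argument.
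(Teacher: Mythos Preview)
Your approach is essentially the same as the paper's---the distinguishing statistic is $|T_n^+||T_n^-|$, and the analysis proceeds via a coupled linear recursion for two second moments---but the packaging differs in a few places worth noting.

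First, your total-variation bound is a genuine simplification: since $S_n/n^2\in[0,1]$, the elementary inequality $d_{\tv}\ge|\E_{q_0}f-\E_{q_1}f|$ for $f\in[0,1]$ is sharper and shorter than the Paley--Zygmund-type bound the paper uses (which gives $d_{\tv}\ge\delta^2/(\delta^2+4)$ after crudely bounding the variance by $n^4$). Second, your state variables $(V_n,U_n)=(|T_n^+|,\,a_n^+-b_n^+)$ are linearly equivalent to the paper's pair $\bigl(|T_n^+||T_n^-|,\,R_n\bigr)$ with $R_n=\sum_\Lambda|T_{n,\Lambda}^+||T_{n,\bar\Lambda}^-|$, since $R_n=nV_n-\tfrac12(V_n^2-U_n^2)$; either choice works.

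The substantive difference is strategic. You aim for an \emph{asymptotic} result $\E_q S_n=\alpha(q)n^2+o(n^2)$ with $\alpha$ strictly monotone, then patch in a separate finite-$n$ argument. The paper instead proves the \emph{non-asymptotic} bound $\E_{q_0}S_n-\E_{q_1}S_n\ge\delta(q_0,q_1)\,n^2$ for \emph{every} $n\ge2$, by a direct inductive comparison of the two recursions. The key trick enabling this is the pointwise inequality $|T_n^+||T_n^-|\le 2R_n$ (equivalently $U_n^2\le V_n(2n-V_n)$, immediate from $|a_n^+-b_n^+|\le\min(V_n,2n-V_n)$), which exactly absorbs the sign changes in the coefficient differences. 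This route bypasses both the phase transition you flag in $L_n$ and any need to identify $\alpha(q)$.

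There is also a small gap in your finite-$n$ step: ``$\E_qS_n$ is a non-constant polynomial in $q$'' does not by itself give $\E_{q_0}S_n\ne\E_{q_1}S_n$ for the particular $q_0<q_1$ at hand, and your alternative (support on balanced trees) only covers $q_0=0$. The paper's non-asymptotic monotonicity handles this automatically; if you want to keep your two-step structure, you would need to prove strict monotonicity of $q\mapsto\E_qS_n$ for each fixed $n$, which is essentially the same inductive comparison.
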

The proof of Theorem~\ref{thm:rooted-unlabelled-BCMRT} (as well as the one of Theorem~\ref{thm:unrooted-unlabelled-BCMRT} in the next section) is based on the method of distinguishing statistics. More precisely, we use the following fact, whose proof can be found for instance in~\cite[Proposition 7.12]{levin2017markov}.

\begin{lemma}\label{lem:distinguishing statistics}
    Let $P$ and $Q$ be two probability distributions over some finite space $\cX$ and let $f\colon \cX\to\R$ be some observable. We have
    \[
    d_\tv(P,Q)\geq \frac{\left(\E_Q f -\E_P f\right)^2}{\left(\E_Q f -\E_P f\right)^2+2\Var_P(f)+2\Var_Q(f)}\, \cdot 
    \]
\end{lemma}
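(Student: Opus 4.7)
The plan is to prove this folklore bound by applying Cauchy--Schwarz to a suitably centered version of $f$. Let $\lambda$ be a common dominating measure for $P$ and $Q$ and write $p=dP/d\lambda$, $q=dQ/d\lambda$. Since $\int(q-p)\,d\lambda=0$, the quantity $\E_Q f-\E_P f$ is unchanged when one subtracts any constant from $f$, so for every $c\in\R$
\[
\E_Q f-\E_P f\;=\;\int (f-c)(q-p)\,d\lambda\,.
\]
Factoring $|q-p|=\sqrt{|q-p|}\cdot\sqrt{|q-p|}$ and applying Cauchy--Schwarz to this integral gives
\[
\bigl(\E_Q f-\E_P f\bigr)^2\;\leq\;\left(\int (f-c)^2|q-p|\,d\lambda\right)\left(\int |q-p|\,d\lambda\right).
\]
The second factor equals $2\,d_\tv(P,Q)$, and the pointwise bound $|q-p|\leq p+q$ yields the clean estimate $\int (f-c)^2|q-p|\,d\lambda\leq \E_P[(f-c)^2]+\E_Q[(f-c)^2]$.

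The key step is then to choose $c=\tfrac{1}{2}(\E_P f+\E_Q f)$. Writing $\Delta=\E_Q f-\E_P f$ and $V=\Var_P(f)+\Var_Q(f)$, this choice gives the identities $\E_P[(f-c)^2]=\Var_P(f)+\Delta^2/4$ and $\E_Q[(f-c)^2]=\Var_Q(f)+\Delta^2/4$, so plugging back in leads to
\[
\Delta^2\;\leq\;2\,d_\tv(P,Q)\bigl(V+\Delta^2/2\bigr)\;=\;2\,d_\tv(P,Q)\,V\;+\;d_\tv(P,Q)\,\Delta^2\,.
\]
Rearranging gives $(1-d_\tv(P,Q))\,\Delta^2\leq 2\,d_\tv(P,Q)\,V$, which is exactly the claimed bound $d_\tv(P,Q)\geq \Delta^2/(\Delta^2+2V)$.

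There is no real obstacle in this argument; it is essentially one application of Cauchy--Schwarz once the right setup is in place. The only genuine subtlety is the choice of the centering constant $c=(\E_P f+\E_Q f)/2$, which is precisely what makes the $\Delta^2/2$ term from the second moments absorb cleanly into the left-hand side and produces the coefficient $1$ in front of $\Delta^2$ in the denominator of the bound. An alternative route would be to take $A=\{f\geq t\}$ in $d_\tv(P,Q)\geq Q(A)-P(A)$ and bound the two tail probabilities via Chebyshev or Cantelli, but that hands-on path only becomes nontrivial once $\Delta$ exceeds a constant multiple of $\sqrt{V}$, whereas the Cauchy--Schwarz argument above also captures the correct $\Delta^2/V$ behaviour in the small-gap regime, which is the regime that matters for the main theorems of Sections~\ref{sec:rooted} and~\ref{sec:unrooted}.
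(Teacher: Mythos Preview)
Your proof is correct, but it proceeds along a genuinely different line from the paper's argument. The paper works through the coupling characterisation $d_\tv(P,Q)=\min_{(X,Y)}\P(X\neq Y)$: for an optimal coupling $(X,Y)$ one has $\P(X\neq Y)\geq \P(f(X)\neq f(Y))$, and the latter is bounded below via the Paley--Zygmund inequality
\[
\P(f(X)\neq f(Y))\geq \frac{\E[|f(X)-f(Y)|]^2}{\E[(f(X)-f(Y))^2]}\,,
\]
after which the numerator is at least $\Delta^2$ and the denominator is expanded and bounded by $\Delta^2+2\Var_P(f)+2\Var_Q(f)$ using $2|\Cov(f(X),f(Y))|\leq \Var_P(f)+\Var_Q(f)$. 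Your route bypasses couplings entirely: you write $\Delta$ as $\int(f-c)(q-p)\,d\lambda$, split $|q-p|=\sqrt{|q-p|}\cdot\sqrt{|q-p|}$, apply Cauchy--Schwarz, and then choose the centering $c=(\E_P f+\E_Q f)/2$ so that the resulting inequality $\Delta^2\leq 2d_\tv(P,Q)(V+\Delta^2/2)$ rearranges exactly to the claim. Your argument is arguably more elementary and self-contained (no coupling, no second-moment method), and the centering trick is a nice touch that recovers the sharp constant~$1$ in front of $\Delta^2$; the paper's route is perhaps more conceptual in that it makes visible why a distinguishing statistic forces a coupling to fail. Both arrive at the identical bound.
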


In the rooted case, our distinguishing observable is based on the size of the two subtrees separated by the root-edge $\rho$. Let $(\rho_+,\rho_-)$ be an arbitrary orientation of $\rho$, and let $T_n^+$ (resp. $T_n^-$) the tree containing $\rho_+$ (resp. $\rho_-$) when edge $\rho$ is removed from $T_n$. A natural question is to determine whether the sequence $(|T_n^+|/n)_{n\geq 1}$ converges, and, if so, whether the limiting distribution depends on $q$. The fact that it converges can be seen by noticing that $(|T_n^+|/n)_{n\geq 1}$ is a martingale. Indeed, for $\Lambda\in \{A,B\}$, let $|T_{n,\Lambda}^+|$ (resp. $|T_{n,\Lambda}^-|$) be the number of nodes of type $\Lambda$ in $T_n^+$ (resp. in $T_n^-$), and let $(\cF_n)_{n\geq 1}$ denotes the filtration keeping track of the whole history (including types and time-labels). Then, for $\Lambda\in \{A,B\}$, we have
\[
\E\left[|T_{n+1,\Lambda}^+| -|T_{n,\Lambda}^+| \,|\, \cF_n\right]=(1-q)\frac{|T_{n,\Lambda}^+|}{n}+q \frac{|T_{n,\bar{\Lambda}}^+|}{n}\, ,
\]
where $\bar{\Lambda}$ is such that $\{\Lambda,\bar{\Lambda}\}=\{A,B\}$. Taking the sum over $\Lambda\in\{A,B\}$ in this equality, we obtain
\[
\E\left[|T_{n+1}^+| -|T_{n}^+| \,|\, \cF_n\right]=\frac{|T_{n}^+|}{n}\, .
\]
Hence $(|T_n^+|/n)_{n\geq 1}$ is a martingale, and since it is positive, it converges almost surely. While we were not able to determine the limiting distribution, Proposition~\ref{prop:diff-expectation-rooted} below entails that it depends on $q$ in a non-trivial way. By symmetry, we have $\E_q[|T_n^+|]=n$ for all $q\in [0,1]$, but the dependence on $q$ will appear in the second moment of $|T_n^+|$. In order to lay the groundwork for the next section, we will rather consider $\E_q[|T_n^+||T_n^-|]$ instead of $\E_q[|T_n^+|^2]$, which amounts to the same thing since
\[
\E_q[|T_n^+||T_n^-|]=\E_q[|T_n^+|(2n-|T_n^+|)]=2n^2-\E_q[|T_n^+|^2]\, .
\]

\begin{proposition}\label{prop:diff-expectation-rooted}
    For all $0\leq q_0<q_1\leq 1/2$ and for all $n\geq 2$, we have
\[
\E_{q_0}[|T_n^+||T_n^-|]-\E_{q_1}[|T_n^+||T_n^-|]\geq \delta(q_0,q_1)n^2\, ,
\]
where
\[
\delta(q_0,q_1)=\frac{1}{3}(q_1-q_0)(1-q_1-q_0)>0\, .
\]
\end{proposition}

Before proving Proposition~\ref{prop:diff-expectation-rooted}, let us see how it implies Theorem~\ref{thm:rooted-unlabelled-BCMRT}.

\begin{proof}[Proof of Theorem~\ref{thm:rooted-unlabelled-BCMRT}]
We use Lemma~\ref{lem:distinguishing statistics} with $f(T_n)=|T_n^+||T_n^-|$. By Proposition~\ref{prop:diff-expectation-rooted}, we have 
\begin{equation*}
\E_{q_0}[f] -\E_{q_1}[f]\geq \delta(q_0,q_1)n^2\, .\label{eq:11}
\end{equation*}
Moreover, since
\[
|T_n^+| |T_n^-|=4n^2\frac{T_n^+}{2n}\left(1-\frac{T_n^+}{2n}\right)\leq n^2\, ,
\]
we have, for all $q\in [0,1/2]$,
\begin{equation*}
\Var_q(f)\leq \E_q[f^2]\leq n^4\, .\label{eq:12}
\end{equation*}
Hence the total-variation distance is larger than $\frac{\delta(q_0,q_1)^2}{\delta(q_0,q_1)^2+4}$, and the optimal risk is bounded away from $1/2$.
\end{proof}

Let us now investigate the dynamic of $|T_n^+| |T_n^-|$ so as to prove Proposition~\ref{prop:diff-expectation-rooted}. Recall that $|T_{n,\Lambda}^+|$ (resp. $|T_{n,\Lambda}^-|$) is the number of nodes of type $\Lambda\in\{A,B\}$ in $T_n^+$ (resp. in $T_n^-$) and that $(\cF_n)_{n\geq 1}$ denotes the filtration keeping track of the whole history. For $\{\Lambda,\bar{\Lambda}\}=\{A,B\}$, conditionally on $\cF_n$, the probability that node $(n+1,\Lambda)$ attaches to $T_n^+$ is
\begin{equation}\label{eq:def-l_A}
w_{n,\Lambda}^+=(1-q)\frac{T_{n,\Lambda}^+}{n}+q\frac{T_{n,\bar{\Lambda}}^+}{n}\, ,
\end{equation}
and the probability that it attaches to $T_n^-$ is $w_{n,\Lambda}^-=1-w_{n,\Lambda}^+$. Hence, conditionally on $\cF_n$, we have
\[
|T_{n+1}^+||T_{n+1}^-| -|T_{n}^+| |T_{n}^-|=
\begin{cases}
2|T_n^-| &\text{with probability $w_{n,A}^+w_{n,B}^+$,}\\
2|T_n^+| &\text{with probability $w_{n,A}^-w_{n,B}^-$,}\\
|T_n^+|+|T_n^-|+1 &\text{with probability $w_{n,A}^+w_{n,B}^- +w_{n,A}^-w_{n,B}^+$\, ,}
\end{cases}
\]
from which we get
\begin{align*}
\E_q\left[ |T_{n+1}^+||T_{n+1}^-|\,|\, \cF_n\right] &=\left(1+\frac{2}{n}\right)|T_{n}^+||T_{n}^-| +\sum_{\Lambda\in\{A,B\}}w_{n,\Lambda}^+w_{n,\bar{\Lambda}}^-\, ,
\end{align*}
where we used that $w_{n,A}^++w_{n,B}^+=\frac{|T_n^+|}{n}$ and $w_{n,A}^-+w_{n,B}^-=\frac{|T_n^-|}{n}$. Moreover, defining
\[
R_n=\sum_{\Lambda\in\{A,B\}} |T_{n,\Lambda}^+||T_{n,\bar{\Lambda}}^-|\, ,
\]
we have
\begin{align}
\sum_{\Lambda\in\{A,B\}}w_{n,\Lambda}^+w_{n,\bar{\Lambda}}^-
&= \frac{(1-q)^2+q^2}{n^2}R_n+\frac{2q(1-q)}{n^2}(|T_{n}^+||T_{n}^-|-R_n)\nonumber\\
&= \frac{2q(1-q)}{n^2}|T_{n}^+||T_{n}^-|+\frac{(1-2q)^2}{n^2}R_n\label{eq:prod-w}\, .
\end{align}

Hence, taking expectations, we obtain
\[
\E_q\left[ |T_{n+1}^+||T_{n+1}^-|\right] =\left(1+\frac{2}{n}+\frac{2q(1-q)}{n^2}\right)\E_q\left[|T_{n}^+||T_{n}^-|\right] +\frac{(1-2q)^2}{n^2}\E_q\left[R_n\right]\, .
\]
We now look for a recurrence relation for the sequence $\E_q[R_n]$. Conditionally on $\cF_n$, we have
\[
R_{n+1}-R_n=
\begin{cases}
|T_n^-| &\text{with probability $w_{n,A}^+w_{n,B}^+$}\\
|T_n^+| &\text{with probability $w_{n,A}^-w_{n,B}^-$}\\
|T_{n,A}^+|+|T_{n,B}^-|+1 &\text{with probability $w_{n,A}^+w_{n,B}^-$}\\
|T_{n,A}^-|+|T_{n,B}^+|+1 &\text{with probability $w_{n,A}^-w_{n,B}^+$}\\
\end{cases}
\]
Rearranging and invoking~\eqref{eq:prod-w}, we obtain
\begin{align*}
\E_q\left[ R_{n+1}\,|\, \cF_n\right] &= \left(1+\frac{2(1-2q)}{n}+\frac{(1-2q)^2}{n^2}\right)R_n+\left(\frac{2q}{n}+\frac{2q(1-q)}{n^2}\right)|T_{n}^+||T_{n}^-|\, .
\end{align*}
Denoting $f_q(n)=\E_q[|T_{n}^+||T_{n}^-|]$ and $g_q(n)=\E_q[R_n]$, we obtain the following recurrence system:
\begin{equation}\label{eq:rec-system}
\begin{cases}
&f_q(n+1)=a_n(q)f_q(n)+b_n(q)g_q(n)\\
&g_q(n+1)=c_n(q)g_q(n)+d_n(q)f_q(n)\\
&f_q(1)=1\, ,\quad g_q(1)=1\, .
\end{cases}
\end{equation}
where
\begin{equation}\label{eq:coeff}
\begin{aligned}
      a_n(q) &= 1+\frac{2}{n}+\frac{2q(1-q)}{n^2}\\
    b_n(q) &=\frac{(1-2q)^2}{n^2}\\
    c_n(q) &= 1+\frac{2(1-2q)}{n}+\frac{(1-2q)^2}{n^2}\\
    d_n(q) &= \frac{2q}{n}+\frac{2q(1-q)}{n^2} \, .
    \end{aligned}
\end{equation}

Let us state one useful fact that will be used in the proof of Proposition~\ref{prop:diff-expectation-rooted}.

\begin{lemma}\label{lem:link_f-g}
    For all $n\geq 1$, we have $|T_n^+||T_n^-|\leq 2R_n$. In particular, $f_q(n)\leq 2g_q(n)$.
\end{lemma}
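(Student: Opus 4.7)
The plan is to use the conservation laws satisfied by the counts on each side of the root edge. Write $a=|T_{n,A}^+|$, $b=|T_{n,B}^+|$, $c=|T_{n,A}^-|$, $d=|T_{n,B}^-|$. Since each time-label $k\in\{1,\dots,n\}$ corresponds to exactly one node of each type, we have the balance relations
\[
a+c=n\quad\text{and}\quad b+d=n.
\]
Then $|T_n^+||T_n^-|=(a+b)(c+d)=ac+bd+(ad+bc)$, while by definition $R_n=ad+bc$. So the desired inequality $|T_n^+||T_n^-|\leq 2R_n$ reduces to showing $ac+bd\leq ad+bc$, i.e.\ $(a-b)(c-d)\leq 0$.

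The balance relations now close the argument in one line: $c-d=(n-a)-(n-b)=-(a-b)$, hence $(a-b)(c-d)=-(a-b)^2\leq 0$. Taking expectations under $\P_q$ then yields $f_q(n)\leq 2g_q(n)$.

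There is no real obstacle here; the only point to be careful about is recognising that the balance relation is the genuine content of the lemma (no such identity would hold, for instance, in the CMRT, where the two types are not equally represented). Once the pointwise inequality $(a-b)(c-d)\leq 0$ is in hand, the expectation bound is immediate.
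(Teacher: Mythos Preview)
Your proof is correct and uses the same essential ingredient as the paper, namely the balance relations $|T_{n,A}^+|+|T_{n,A}^-|=|T_{n,B}^+|+|T_{n,B}^-|=n$. The paper proceeds slightly differently: it rewrites $R_n=n|T_n^+|-2|T_{n,A}^+||T_{n,B}^+|$ and then applies the AM--GM type bound $|T_{n,A}^+||T_{n,B}^+|\leq |T_n^+|^2/4$ to reach $R_n\geq |T_n^+||T_n^-|/2$. Your route is more direct: by expanding both sides you reduce to $(a-b)(c-d)\leq 0$ and then observe from the balance relations that $c-d=-(a-b)$, so in fact $2R_n-|T_n^+||T_n^-|=(a-b)^2\geq 0$. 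This actually yields the exact identity rather than just an inequality, making your argument marginally sharper and more transparent than the paper's.
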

\begin{proof}[Proof of Lemma~\ref{lem:link_f-g}]
  We have
  \begin{align*}
      R_n&=|T_{n,A}^+|(n-|T_{n,B}^+|)+|T_{n,B}^+|(n-|T_{n,A}^+|)\\
      &=n|T_n^+|-2|T_{n,A}^+||T_{n,B}^+|\, .
      \end{align*}
      Using that $u(1-u)\leq 1/4$ for $u\in [0,1]$, we have $|T_{n,A}^+||T_{n,B}^+|\leq |T_n^+|^2/4$. Hence,
      \[
R_n\geq n|T_n^+|-\frac{|T_n^+|^2}{2}=\frac{|T_n^+|}{2}\left(2n-|T_n^+|\right)=\frac{|T_n^+||T_n^-|}{2}\, \cdot  
      \]
\end{proof}

We are now ready to prove Proposition~\ref{prop:diff-expectation-rooted}.

\begin{proof}[Proof of Proposition~\ref{prop:diff-expectation-rooted}]
Let $(f_q(n))_{n\geq 1}$ and $(g_q(n))_{n\geq 1}$ be defined by~\eqref{eq:rec-system} and let $0\leq q_0<q_1\leq 1/2$. We will show by induction that for all $n\geq 2$,
\begin{equation}\label{eq:rec-to-show}
\begin{cases}
   & f_{q_0}(n)-f_{q_1}(n)\geq (f_{q_0}(2)-f_{q_1}(2))\prod_{i=2}^{n-1}a_i(q_0)\\
   & g_{q_0}(n)-g_{q_1}(n)\geq 0\, ,
\end{cases}
\end{equation}
The property is clearly true for $n=2$, with the convention that $\prod_{i=2}^{i=1}a_i(q_0)=1$. Assume the property holds for some $n\geq 2$. We have
\begin{align*}
    f_{q_0}(n+1)-f_{q_1}(n+1)&= a_n(q_0)\left(f_{q_0}(n)-f_{q_1}(n)\right)+f_{q_1}(n)\left(a_n(q_0)-a_n(q_1)\right)\\
    &\quad  +b_n(q_0)\left(g_{q_0}(n)-g_{q_1}(n)\right)+g_{q_1}(n)\left(b_n(q_0)-b_n(q_1)\right)\, .
\end{align*}
Using the second part of the induction hypothesis and the fact that $b_n(q_0)\geq 0$, we have
\[
b_n(q_0)\left(g_{q_0}(n)-g_{q_1}(n)\right)\geq 0\, .
\]
Next, observe that
\[
a_n(q_1)-a_n(q_0)=\frac{2}{n^2}\left(q_1(1-q_1)-q_0(1-q_0)\right)=\frac{b_n(q_0)-b_n(q_1)}{2}\geq 0\, .
\]
Hence,
\[
f_{q_0}(n+1)-f_{q_1}(n+1)\geq a_n(q_0)\left(f_{q_0}(n)-f_{q_1}(n)\right)+\left(b_n(q_0)-b_n(q_1)\right)\left(g_{q_1}(n)-\frac{f_{q_1}(n)}{2}\right)\, .
\]
By Lemma~\ref{lem:link_f-g}, we have $2g_{q_1}(n)\geq f_{q_1}(n)$, which yields
\[
    f_{q_0}(n+1)-f_{q_1}(n+1)\geq  a_n(q_0)\left(f_{q_0}(n)-f_{q_1}(n)\right)\, .
\]
Let us now show that $g_{q_0}(n+1)-g_{q_1}(n+1)\geq 0$. By the induction hypothesis, we have
\begin{align*}
    g_{q_0}(n+1)&\geq  c_n(q_0)g_{q_1}(n)+d_n(q_0)f_{q_1}(n)\\
    &= g_{q_1}(n+1)+ \left(c_n(q_0)-c_n(q_1)\right)g_{q_1}(n)+
    \left(d_n(q_0)-d_n(q_1)\right)f_{q_1}(n)\, .
\end{align*}
Observing that
\[
d_n(q_1)-d_n(q_0)=\frac{2}{n}(q_1-q_0)+\frac{2}{n^2}\left(q_1(1-q_1)-q_0(1-q_0)\right)=\frac{c_n(q_0)-c_n(q_1)}{2}\geq 0\, ,
\]
we have
\[
g_{q_0}(n+1)\geq g_{q_1}(n+1)+ \left(c_n(q_0)-c_n(q_1)\right)\left(g_{q_1}(n)-\frac{f_{q_1}(n)}{2}\right)\, .
\]
Invoking Lemma~\ref{lem:link_f-g} again, this concludes the proof of~\eqref{eq:rec-to-show}.
Now, we have
\[
\prod_{i=2}^{n-1}a_i(q_0)=\prod_{i=2}^{n-1}\frac{i^2+2i+2q(1-q)}{i^2}\geq \prod_{i=2}^{n-1}\frac{i+2}{i}=\frac{n(n+1)}{6}\geq \frac{n^2}{6}\, \cdot 
\]
The proof can now be concluded by noticing that $f_{q_0}(2)-f_{q_1}(2)=2(q_1-q_0)(1-q_1-q_0)$.
\end{proof}

\section{Testing in unrooted unlabelled BCMRT}\label{sec:unrooted}

We now observe a tree $T_n$ of size $2n$ without types, without time labels, and without the root edge. We show that it is still possible to distinguish between two distinct parameters $q_0$ and $q_1$, with a risk strictly less than $1/2$.

\begin{theorem}\label{thm:unrooted-unlabelled-BCMRT}
Assume that one observes an unrooted unlabelled {\rm BCMRT}. Then, for all $0\leq q_0<q_1\leq 1/2$, we have
\[
\sup_{n\geq 2} \bR^\star_{q_0,q_1} <\frac{1}{2}\, .
\]
\end{theorem}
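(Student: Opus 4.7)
The plan is to apply the distinguishing-statistic method (Lemma~\ref{lem:distinguishing statistics}) with $f(T_n)=S_n$, the sum-of-distances statistic introduced in the excerpt; recall that $S_n$ equals the Wiener index $\sum_{\{u,v\}}d(u,v)$ of the tree. The conclusion will follow from two estimates, valid uniformly in $n$ and in $q\in\{q_0,q_1\}$:
\begin{equation*}
\E_{q_0}[S_n]-\E_{q_1}[S_n]\;=\;\Omega(n^2)\qquad\text{and}\qquad \Var_q(S_n)\;=\;O(n^4).
\end{equation*}
Together with Lemma~\ref{lem:distinguishing statistics}, this gives $d_\tv(P_{q_0},P_{q_1})\geq c(q_0,q_1)>0$ uniformly in $n$, hence the optimal risk is bounded away from $1/2$.

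For the expectation I would derive a one-step recursion. Writing $p_A$ and $p_B$ for the parents of the pair arriving at time $n+1$, and $D_n(v)=\sum_{u}d(v,u)$ for the total distance from $v$ in $T_n$, a direct calculation gives
\begin{equation*}
S_{n+1}-S_n \;=\; D_n(p_A)+D_n(p_B)+4n+2+d(p_A,p_B).
\end{equation*}
The key observation is that $\E[D_n(p_A)+D_n(p_B)\mid T_n]=\frac{2S_n}{n}$ \emph{independently of $q$}: indeed, the marginal law of each parent is a convex combination of uniform on $A$-nodes and uniform on $B$-nodes with total mass one, so the sum is simply the uniform average of $D_n$ over all $2n$ vertices, which equals $2S_n/n$. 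Thus all $q$-dependence is concentrated in the term $\E_q[d(p_A,p_B)]$, which decomposes naturally via the type-colored Wiener indices $S_n^{AA},S_n^{BB},S_n^{AB}$. Setting up a coupled system of recursions for these three quantities (in the same spirit as the rooted case of Section~\ref{sec:rooted}) and solving it yields an asymptotic expansion
\begin{equation*}
\E_q[S_n]\;=\;4n^2 H_{n-1}+\gamma(q)\,n^2+o(n^2),
\end{equation*}
where $\gamma$ is continuous and strictly monotone on $[0,1/2]$; hence $\gamma(q_0)\neq \gamma(q_1)$ and the expectation gap is of order exactly $n^2$.

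For the variance I would apply Efron-Stein to the natural i.i.d.\ construction of the BCMRT: for each step $k\in\{2,\dots,n\}$, the input $\omega_k$ consists of the two type preferences $(\eta_k^A,\eta_k^B)$ and two parent time-labels drawn uniformly from $\{1,\dots,k-1\}$. Under the canonical coupling that resamples only $\omega_k$ and keeps all later inputs intact (possible because subsequent steps refer to time labels, not to positions in the tree), the resampled tree $T_n^{(k)}$ differs from $T_n$ only by rewiring the edges incident to $(k,A)$ and $(k,B)$. For a single parent-rewiring of a vertex $v$ with descendant-subtree $\tau$, old parent $u$, and new parent $u'$, the change in the Wiener index is given exactly by
\begin{equation*}
|\tau|\,\bigl(D^{\bar\tau}_n(u')-D^{\bar\tau}_n(u)\bigr), \qquad D^{\bar\tau}_n(w)\;=\;\sum_{w'\notin\tau}d(w,w').
\end{equation*}
Combining second-moment bounds on the subtree sizes $\tau_k$ of typical order $n/k$, together with controlled fluctuations of $D_n(\cdot)$, one obtains $\E[(S_n-S_n^{(k)})^2]=O(n^4/k^2)$, and summing over $k$ produces the desired $\Var_q(S_n)=O(n^4)$.

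The main obstacle is precisely this variance bound. A naive control using the triangle inequality $|d(u,w')-d(u',w')|\leq d(u,u')$ together with $|\tau|\leq n$ only gives the useless estimate $\Var(S_n)=O(n^6)$, whereas the expectation separation is only $O(n^2)$, so we must squeeze out the sharp $n^4$. Achieving this requires exploiting cancellations in $D^{\bar\tau}_n(u')-D^{\bar\tau}_n(u)$ and using that large subtrees $\tau_k$ are atypical --- this is where most of the technical work in the careful Efron-Stein application concentrates.
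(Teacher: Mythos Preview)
Your overall strategy coincides with the paper's: apply Lemma~\ref{lem:distinguishing statistics} to $S_n$, establish an $\Omega(n^2)$ expectation gap and an $O(n^4)$ variance. Your Wiener-index recursion via $D_n(p_A)+D_n(p_B)$ and $d(p_A,p_B)$ is in fact the \emph{same} coupled system the paper derives: your cross-type Wiener index $S_n^{AB}$ is exactly the paper's auxiliary $K_n=\sum_e\sum_\Lambda|T_{n,\Lambda}^{e_+}||T_{n,\bar\Lambda}^{e_-}|$, and after your decomposition of $\E_q[d(p_A,p_B)\mid T_n]$ one recovers precisely $\E_q[S_{n+1}]=a_n(q)\E_q[S_n]+b_n(q)\E_q[K_n]+2(2n+1)$. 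The difference is in how to extract the $\Omega(n^2)$ gap: you propose computing the second-order coefficient $\gamma(q)$ and proving it strictly monotone, whereas the paper avoids this entirely via a direct inductive comparison, the key input being the \emph{pointwise} inequality $S_n\le 2K_n$ (Lemma~\ref{lem:link_f-g_unrooted}), which lets one show $\E_{q_0}[S_n]-\E_{q_1}[S_n]\ge (f_{q_0}(2)-f_{q_1}(2))\prod_{i=2}^{n-1}a_i(q_0)\gtrsim n^2$ without ever identifying $\gamma$. This is both shorter and more robust than analysing the asymptotics of a coupled two-dimensional recursion.

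For the variance you are closer to the paper than you seem to think. Your exact formula $\Delta S_n=|\tau|\bigl(D_n^{\bar\tau}(u')-D_n^{\bar\tau}(u)\bigr)$ is the same quantity the paper writes as $\sum_{e\in\mathcal P}|\tau|\bigl(|T_n^{e_-}|-|T_n^{e_+}|+|\tau|\bigr)$, and the ``naive'' triangle-inequality bound $|D_n^{\bar\tau}(u')-D_n^{\bar\tau}(u)|\le 2n\,d(u,u')$ is already sharp enough: combined with the independence of $|\tau_k|$ and $d(u,u')$, the subtree moment $\E[|\tau_k|^2]\le 2n^2/k^2$ (Lemma~\ref{lem:size-Ti}) and the depth bound $\E[d(u,u')^2]=O((\log k)^2)$ (Lemma~\ref{lem:distance-to-root}), it gives $\E[(\Delta S_n)^2]=O(n^4(\log k)^2/k^2)$, which sums to $O(n^4)$. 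No further cancellation is needed; the only place where crudely replacing $|\tau_k|$ by $n$ would hurt is precisely the subtree-size moment, which you already flagged.
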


Recall that in the rooted setting, our distinguishing statistic was $f(T_n)=|T_n^+||T_n^-|$, the product of sizes of the two subtrees around the root edge. Since we can not locate the root edge anymore, we will instead take the sum of such products over all the edges of the tree. Namely, for each edge $e\in E_n$, we take an arbitrary orientation $e=(e_-,e_+)$ and denote by $T_n^{e_+}$ (resp. $T_n^{e_-}$) the tree that contains $e_+$ (resp. $e_-$) after removing the edge $e$. We then define the variable
\[
S_n=\sum_{e\in E_n} |T_n^{e_+}||T_n^{e_-}|\, .
\]
Note that $S_n$ is equal to the sum of all pairwise distances in the tree, since each edge is counted as many times as the pairs of nodes it separates. 

Let us look for a recurrence relation for $\E_q[S_n]$. We have
\[
\E_q\left[S_{n+1}\,|\, \cF_n\right]=2(2n+1)+\sum_{e\in E_n}\E_q\left[ |T_{n+1}^{e_+}|\cdot |T_{n+1}^{e_-}|\,|\, \cF_n\right]\, .
\]
Then, letting $w_{n, \Lambda}^{e_+}$ be the conditional probability, given $\cF_n$, that node $(n+1, \Lambda)$, for $ \Lambda=A,B$, attaches in $T_n^{e_+}$, and $w_{n, \Lambda}^{e_-}$ the probability that it attaches in $T_n^{e_-}$, we have that that
$\E_q\left[ |T_{n+1}^{e_+}|\cdot |T_{n+1}^{e_-}|\,|\, \cF_n\right]$ is equal to
\begin{align*}
 w_{nA}^{e+}w^{e+}_{nB}(|T_n^{e+}|+2)|T_{n}^{e-}|  +  w_{nA}^{e-}w^{e-}_{nB}(|T_n^{e-}|+2)|T_{n}^{e+}|+(w_{nA}^{e+}w^{e-}_{nB}+w_{nA}^{e-}w^{e+}_{nB})(|T_n^{e+}|+1)(|T_{n}^{e-}|+1)
\end{align*}
Therefore, denoting by $\overline{\Lambda}$ the complement of $\Lambda$ in $\{A,B\}$, we have that
\begin{align}
   \E_q\left[S_{n+1}\,|\, \cF_n\right]   &= 2(2n+1)+\left(1+\frac{2}{n}\right)S_n +\sum_{e\in E_n}\sum_{ \Lambda\in\{A,B\}} w_{n, \Lambda}^{e_+}w_{n,\bar{ \Lambda}}^{e_-}\label{eq:recSn-first}\, ,
\end{align}
where the overline on $ \Lambda$ denotes its complement
in the set $\{A, B\}$.
Letting $|T_{n, \Lambda}^{e_+}|$ (resp. $|T_{n, \Lambda}^{e_-}|$) be the number of nodes of type $ \Lambda$ in $T_n^{e_+}$ (resp. in $T_n^{e_-}$), we have
\begin{align*}
    \sum_{ \Lambda\in\{A,B\}} w_{n, \Lambda}^{e_+}w_{n,\bar{ \Lambda}}^{e_-} &= \frac{ (1-q)^2+q^2}{n^2}\sum_{ \Lambda\in\{A,B\}}|T_{n, \Lambda}^{e_+}||T_{n,\bar{ \Lambda}}^{e_-}|+\frac{2q(1-q)}{n^2}\sum_{ \Lambda\in\{A,B\}}|T_{n, \Lambda}^{e_+}||T_{n, \Lambda}^{e_-}|\, .
\end{align*}
Similarly to~\eqref{eq:prod-w}, we obtain
\begin{equation}
\sum_{e\in E_n}\sum_{ \Lambda\in\{A,B\}} w_{n, \Lambda}^{e_+}w_{n,\bar{ \Lambda}}^{e_-} =\frac{2q(1-q)}{n^2}S_n +\frac{(1-2q)^2}{n^2}K_n\, ,\label{former-w}
\end{equation}
with
\[
K_n=\sum_{e\in E_n}\sum_{ \Lambda\in\{A,B\}}|T_{n, \Lambda}^{e_+}||T_{n,\bar{ \Lambda}}^{e_-}|\, .
\]
Coming back to~\eqref{eq:recSn-first}, we get
\[
\E_q\left[S_{n+1}\,|\,\cF_n\right]=2(2n+1)+\left(1+\frac{2}{n}+\frac{2q(1-q)}{n^2}\right)S_n+\frac{(1-2q)^2}{n^2}K_n\, .
\]
Next, we look for a recurrence relation for $\E_q[K_n]$. Working in a similar way as for $S_n$, we recover that
\[
\E_q \left[K_{n+1}\,|\,\mathcal{F}_n\right]\,=\,  2(n+1)+K_n+\sum _{e\in E_n}\sum _{ \Lambda\in\{A,B\}}\big\{|T^{e_+}_{n, \Lambda}| w^{e_-}_{n,\bar{ \Lambda}}+|T^{e_-}_{n,\bar{ \Lambda}}| w^{e_+}_{n, \Lambda}\big\}+\sum _{e\in E_n}\sum _{ \Lambda\in\{A,B\}}w^{e_+}_{n, \Lambda}w^{e_-}_{n,\bar{ \Lambda}}\, .\label{eq:103}
\]
Writing $w_{n,\Lambda}^{e-}=\frac{(1-q)|T^{e_-}_{n, \Lambda}|}{n}+\frac{q|T^{e_-}_{n, \overline{\Lambda}}|}{n}$, and similarly for $w_{n,\Lambda}^{e+}$, we obtain that
\[
\sum _{e\in E_n}\sum _{ \Lambda\in\{A,B\}}\big\{|T^{e_+}_{n, \Lambda}| w^{e_-}_{n,\bar{ \Lambda}}+|T^{e_-}_{n,\bar{ \Lambda}}| w^{e_+}_{n, \Lambda}\big\}=\frac{2(1-2q)}{n}K_n+\frac{2q}{n}S_n\, ,
\]
Combining the last two displays with~\eqref{former-w}, we arrive at
\[
\E_q\left[K_{n+1}\,|\, \cF_n\right]=2(n+1)+\left(1+\frac{2(1-2q)}{n}+\frac{(1-2q)^2}{n^2}\right)K_n +\left(\frac{2q}{n}+\frac{2q(1-q)}{n^2}\right)S_n\, .
\]
Taking expectations, we obtain the following recurrence system:
\begin{equation}\label{eq:rec-system-unrooted}
\begin{cases}
&\E_q[S_{n+1}]=a_n(q)\E_q[S_n]+b_n(q)\E_q[K_n]+2(2n+1)\\
&\E_q[K_{n+1}]=c_n(q)\E_q[K_n]+d_n(q)\E_q[S_n]+2(n+1)\\
&f_q(1)=1\, ,\quad g_q(1)=1\, ,
\end{cases}
\end{equation}
where $a_n(q), b_n(q), c_n(q), d_n(q)$ are the same as in~\eqref{eq:coeff}.

We now establish the following asymptotic equivalence.

\begin{proposition}\label{prop:asympt-unrooted}
For all $q\in [0,1/2]$, we have
\[
\E_q[S_n]\underset{n\to \infty}{\sim} 4n^2\log n\, .
\]
\end{proposition}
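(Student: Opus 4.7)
The plan is to derive and solve a recursion for $F_n := \E_q[S_n]$. I would first use that $S_n$ coincides with the sum of all pairwise distances in $T_n$ (each edge $e$ contributes $|T_n^{e_+}||T_n^{e_-}|$, which equals the number of unordered pairs it separates, so summing over $e$ recovers the total distance). The main insight will be that the $q$-dependence vanishes from the dominant term in the recursion, through a symmetry between the biases applied to the two incoming leaves.

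I would begin by decomposing the increment from step $n-1$ to step $n$. Let $V_{n-1}$ denote the vertex set of $T_{n-1}$ and set $D_{n-1}(w) := \sum_{v\in V_{n-1}} d_{n-1}(w,v)$. If $(n,A)$ attaches to $w_1 \in V_{n-1}$ and $(n,B)$ attaches to $w_2 \in V_{n-1}$, a direct computation yields
\[
S_n = S_{n-1} + D_{n-1}(w_1) + D_{n-1}(w_2) + d_{n-1}(w_1,w_2) + 4n - 2.
\]
Conditioning on $T_{n-1}$, the fact that under $w_1$ each type-$A$ vertex has mass $(1-q)/(n-1)$ and each type-$B$ vertex mass $q/(n-1)$, with the roles swapped under $w_2$, makes the sum collapse:
\[
\E\bigl[D_{n-1}(w_1) + D_{n-1}(w_2) \bigm| T_{n-1}\bigr] = \frac{1}{n-1} \sum_{v \in V_{n-1}} D_{n-1}(v) = \frac{2 S_{n-1}}{n-1},
\]
where I used $\sum_v D_{n-1}(v) = 2 S_{n-1}$. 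For the secondary term, using only that the four joint-type probabilities of $(w_1,w_2)$ sum to $1$ together with the trivial fact that each type-restricted sum of distances is at most $2S_{n-1}$, I would get $\E[d_{n-1}(w_1,w_2)\mid T_{n-1}] \leq 2S_{n-1}/(n-1)^2$, uniformly in $q$. Taking expectations produces the recursion
\[
F_n = \frac{n+1}{n-1}\, F_{n-1} + 4n - 2 + R_n, \qquad 0 \leq R_n \leq \frac{2 F_{n-1}}{(n-1)^2}.
\]

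I would then solve this recursion by a bootstrap. A first crude use of the bound on $R_n$ gives $F_n \leq \frac{n^2+1}{(n-1)^2} F_{n-1} + 4n$, and iterating (noting $\prod_{k=2}^n \frac{k^2+1}{(k-1)^2} = \Theta(n^2)$) yields the a priori estimate $F_n = O(n^2 \log n)$. Substituting this back into the bound on $R_n$ gives $R_n = O(\log n)$. Setting $H_n := F_n/(n(n+1))$ turns the refined recursion into
\[
H_n = H_{n-1} + \frac{4n-2}{n(n+1)} + O\!\left(\frac{\log n}{n^2}\right),
\]
which telescopes to $H_n = 4 \log n + O(1)$, hence $F_n = n(n+1)H_n \sim 4 n^2 \log n$. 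I do not foresee a serious obstacle; the only delicate point is the sharpness of the error term $R_n$, for which the bootstrap through the preliminary $O(n^2 \log n)$ bound is necessary (since the trivial diameter bound $d(w_1,w_2)\leq 2(n-1)$ would leave a stray $O(\log n)$ in $H_n$). A pleasant byproduct is the transparency of the uniformity in $q$: the parameter enters only through the subdominant $R_n$, never through the leading terms.
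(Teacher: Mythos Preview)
Your proof is correct and takes a genuinely different route from the paper. The paper works at the level of the edge decomposition of $S_n$: it derives a two-dimensional recursion for the pair $(\E_q[S_n],\E_q[K_n])$, where $K_n=\sum_{e}\sum_{\Lambda}|T_{n,\Lambda}^{e_+}||T_{n,\bar\Lambda}^{e_-}|$ is a type-weighted cross term. The $q$-dependence is tracked explicitly through the coefficients $a_n(q),b_n(q),c_n(q),d_n(q)$, an a priori bound $\E_q[K_n]\leq n^2+2n^2\log n$ is proved separately (using $S_n\leq 2K_n$), and the solution is expanded as a sum of three pieces evaluated via Gamma-function products. Your approach instead exploits the distance interpretation of $S_n$ directly: the symmetry between the masses assigned by $(n,A)$ and $(n,B)$ to the two types makes $\E[D_{n-1}(w_1)+D_{n-1}(w_2)\mid T_{n-1}]=2S_{n-1}/(n-1)$ exactly, so the $q$-dependence is pushed entirely into the subdominant remainder $R_n=\E[d_{n-1}(w_1,w_2)]$, which you then control by a bootstrap. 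Your remainder is in fact precisely the quantity the paper writes as $\frac{2q(1-q)}{(n-1)^2}\E_q[S_{n-1}]+\frac{(1-2q)^2}{(n-1)^2}\E_q[K_{n-1}]$; you never need to isolate $K_n$ because the crude bound $R_n\leq 2F_{n-1}/(n-1)^2$ already suffices for the asymptotic. This makes your argument shorter and more transparent for Proposition~\ref{prop:asympt-unrooted}. The trade-off is that the paper's $(S_n,K_n)$ system is reused verbatim for Proposition~\ref{prop:diff-unrooted}, where the $q$-dependent second-order term must be tracked precisely; your method would have to be supplemented there.
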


Before proving Proposition~\ref{prop:asympt-unrooted}, we first state a useful fact that will serve as an analogue of Lemma~\ref{lem:link_f-g} of the paper. 

\begin{lemma}\label{lem:link_f-g_unrooted}
    For all $n\geq 1$, we have $S_n\leq 2K_n$. In particular, $\E_q[S_n]\leq 2\E_q[K_n]$.
\end{lemma}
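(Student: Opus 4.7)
The plan is to mimic the proof of Lemma~\ref{lem:link_f-g} from the rooted case and establish the inequality $S_n \leq 2K_n$ pointwise in $n$, edge by edge. This will obviously imply the expectation bound, so the only work is the deterministic pointwise statement.

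Fix an edge $e \in E_n$ and set $a = |T_{n,A}^{e_+}|$ and $b = |T_{n,B}^{e_+}|$. The key structural observation particular to the BCMRT is that at time $n$ there are exactly $n$ nodes of each type in the whole tree; hence removing $e$ splits each type in a complementary manner, giving $|T_{n,A}^{e_-}| = n - a$ and $|T_{n,B}^{e_-}| = n - b$. From here the computation is immediate:
\[
\sum_{\Lambda \in \{A,B\}} |T_{n,\Lambda}^{e_+}| |T_{n,\bar\Lambda}^{e_-}| \;=\; a(n-b) + b(n-a) \;=\; n(a+b) - 2ab,
\]
while $|T_n^{e_+}| |T_n^{e_-}| = (a+b)(2n - a - b) = 2n(a+b) - (a+b)^2$. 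Comparing the two expressions, the edge-wise inequality reduces to $(a+b)^2 \geq 4ab$, i.e.\ to $(a-b)^2 \geq 0$, which is trivially true. Summing over $e \in E_n$ yields $2K_n \geq S_n$, and taking expectations finishes the proof.

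There is no real obstacle here; the only thing to highlight is the use of the balanced structure of the model (which forces $|T_{n,A}^{e_-}| + |T_{n,A}^{e_+}| = n$ and similarly for $B$). Without this balance the quantity $ad + bc - (a+b)(c+d)/2 = \tfrac{1}{2}\bigl((a-b)(d-c) + \ldots\bigr)$ would not have a sign in general, so this is where the ``balanced'' hypothesis of BCMRT is essentially used. This is the analogue of the step in Lemma~\ref{lem:link_f-g} where the inequality $|T_{n,A}^+||T_{n,B}^+| \leq |T_n^+|^2 / 4$ is applied.
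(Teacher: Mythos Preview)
Your proof is correct and follows essentially the same approach as the paper: the paper's proof simply says that the argument of Lemma~\ref{lem:link_f-g} extends edge by edge, yielding $|T_{n,A}^{e_+}||T_{n,B}^{e_-}|+|T_{n,B}^{e_+}||T_{n,A}^{e_-}|\geq \tfrac{1}{2}|T_n^{e_+}||T_n^{e_-}|$, and then sums over $e\in E_n$. Your reduction to $(a-b)^2\geq 0$ is exactly the content of the inequality $|T_{n,A}^{e_+}||T_{n,B}^{e_+}|\leq |T_n^{e_+}|^2/4$ used there.
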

\begin{proof}[Proof of Lemma~\ref{lem:link_f-g_unrooted}]
    The proof of Lemma~\ref{lem:link_f-g} can be extended to any edge $e$:
    \[
    |T_{n,A}^{e_+}||T_{n,B}^{e_-}|+|T_{n,B}^{e_+}||T_{n,A}^{e_-}|\geq \frac{|T_n^{e_+}||T_n^{e_-}|}{2}\, ,
    \]
    and it suffices to sum over all edges $e\in E_n$ to get the desired bound. 
\end{proof}

We are now ready to prove Proposition~\ref{prop:asympt-unrooted}.

\begin{proof}[Proof of Proposition~\ref{prop:asympt-unrooted}]
Expanding the relation for $\E_q[S_n]$ in~\eqref{eq:rec-system-unrooted}, we have
    \begin{align*}
\E_q[S_{n+1}]&=\underbrace{\prod_{i=1}^n a_i(q)}_{(\mathbf{I})} +\underbrace{\sum_{i=1}^n \left(\prod_{j=i+1}^n a_j(q)\right)b_i(q)\E_q[K_i]}_{(\mathbf{II})}+\underbrace{2\sum_{i=1}^n \left(\prod_{j=i+1}^n a_j(q)\right)(2i+1)}_{(\mathbf{III})}\, .
     \end{align*}
    First, we see that 
    \[
    (\mathbf{I})=\prod_{i=1}^n\frac{i^2+2i+2q(1-q)}{i^2}\leq \prod_{i=1}^n\frac{(i+1)^2}{i^2}=(n+1)^2\, .
    \]
    To bound $(\mathbf{II})$, let us first show that for all $q\in [0,1/2]$ and for all $n\geq 1$, we have
    \[
    \E_q[K_n]\leq n^2+2n^2\log n\, .
    \]
    This holds for $n=1$ and, by Lemma~\ref{lem:link_f-g_unrooted}, we have, for all $n\geq 1$,
    \begin{align*}
        \E_q[K_{n+1}]&=\left(1+\frac{2(1-2q)}{n}+\frac{(1-2q)^2}{n^2}\right)\E_q[K_n]+\left(\frac{2q}{n}+\frac{2q(1-q)}{n^2}\right)\E_q[S_n]+2(n+1)\\
        &\leq \left(1+\frac{2}{n}+\frac{1}{n^2}\right)\E_q[K_n]+2(n+1)\, .
    \end{align*}
    Iterating this inequality, we get
    \begin{align*}
        \E_q[K_{n+1}]&\leq \prod_{i=1}^n \left(1+\frac{2}{i}+\frac{1}{i^2}\right)+2\sum_{i=1}^n(i+1)\prod_{j=i+1}^n  \left(1+\frac{2}{j}+\frac{1}{j^2}\right)\\
        &= \prod_{i=1}^n\frac{(i+1)^2}{i^2}+2\sum_{i=1}^n \frac{(n+1)^2}{i+1}\,\leq\, (n+1)^2+2(n+1)^2\log(n+1)\, ,
    \end{align*}
    where for the last inequality, we used that $\sum_{i=2}^{n+1}\frac{1}{i}\leq\log(n+1)$. Hence, since $b_i(q)\leq 1/i^2$, we have
    \begin{align*}
        (\mathbf{II})&\leq \sum_{i=1}^n \left(\prod_{j=i+1}^n a_j(q)\right)\left(1+2\log(i)\right) \, .
    \end{align*}
Using that 
\[
\prod_{j=i+1}^n a_j(q)\leq \prod_{j=i+1}^n \left(1+\frac{2}{j}+\frac{1}{j^2}\right)=\frac{(n+1)^2}{(i+1)^2}\, ,
\]
we then have
\begin{align*}
      (\mathbf{II})&\leq (n+1)^2\sum_{i=1}^n \frac{1+2\log(i)}{(i+1)^2}\, =\, O(n^2)\, .
    \end{align*}
Moving to $(\mathbf{III})$, we first observe that, for $\gamma=\sqrt{1-2q(1-q)}$,
\begin{align*}
\prod_{i=1}^n a_i(q) &\,=\, \frac{1}{\Gamma(n+1)^2}\prod_{i=1}^{n}\left(i+1+\gamma\right)\left(i+1-\gamma\right)\\
&\,=\,\frac{\Gamma(n+2+\gamma)\Gamma(n+2-\gamma)}{\Gamma(2+\gamma)\Gamma(2-\gamma)\Gamma(n+1)^2}\, ,
\end{align*}
where $\Gamma$ is the Gamma function. Since $\Gamma(n+2+\gamma)\Gamma(n+2-\gamma)\underset{n\to \infty}{\sim}\Gamma(n+2)^2$ (which can be checked using Stirling's formula), we get
\[
        (\mathbf{III})\,=\, 2\sum_{i=1}^n \frac{\Gamma(n+2+\gamma)\Gamma(n+2-\gamma)}{\Gamma(n+1)^2}\cdot \frac{\Gamma(i+1)^2(2i+1)}{\Gamma(i+2+\gamma)\Gamma(i+2-\gamma)}\,\underset{n\to\infty}{\sim}\, 2n^2\sum_{i=1}^n\varphi(i)\, ,
\]
with $\varphi(i)=\frac{\Gamma(i+1)^2(2i+1)}{\Gamma(i+2+\gamma)\Gamma(i+2-\gamma)}$. One way to obtain the asymptotics for $\sum_{i=1}^n \varphi(i)$ is to first consider the first $\lceil \log n\rceil$ terms and use that $\log\Gamma$ is convex so that
    \[
\sum_{i=1}^{\lceil\log n\rceil}\varphi(i)\leq \sum_{i=1}^{\lceil\log n\rceil}\frac{2i+1}{(i+1)^2}=O(\log\log n)\, .
    \]
    For the remaining terms, we have
 \[
 \left|\sum_{i=\lceil\log n\rceil +1}^n\left(\varphi(i)- \frac{2i+1}{(i+1)^2}\right)\right|\leq \sup_{k> \lceil\log n\rceil }\left| \frac{\varphi(k)(k+1)^2}{2k+1}-1\right|\sum_{i=\lceil\log n\rceil +1}^n \frac{2i+1}{(i+1)^2}\,=\, o(\log n)\, ,
\]
    since $\sup_{k> \lceil\log n\rceil }\left| \frac{\varphi(k)(k+1)^2}{2k+1}-1\right|=o(1)$. Hence,
    \[
(\mathbf{III})\underset{n\to\infty}{\sim}2n^2\sum_{i=\lceil\log n\rceil +1}^n \frac{2i+1}{(i+1)^2}\underset{n\to\infty}{\sim}4n^2\log n\, ,
    \]
    which concludes the proof.
\end{proof}

As far as inference is concerned, Proposition~\ref{prop:asympt-unrooted} may be seen as a negative result: $\E_q[S_n]$ does not depend on $q$ at first order. This indicates that, in order to apply Lemma~\ref{lem:distinguishing statistics}, we do need more subtle bounds on the variance than in the rooted case. Indeed, in that case, a crude bound on the second moment of $|T_n^+||T_n^-|$ was sufficient, while Proposition~\ref{prop:asympt-unrooted} implies that the second moment of $S_n$ is at least of order $n^4\log^2(n)$, which is much larger than the squared difference between expectations. Theorem~\ref{thm:unrooted-unlabelled-BCMRT} will be a consequence of the two following propositions.

\begin{proposition}\label{prop:diff-unrooted}
For all $0\leq q_0<q_1\leq 1/2$ and for all $n\geq 2$, we have
\[
\E_{q_0}[S_n]-\E_{q_1}[S_n]\geq \delta(q_0,q_1)n^2\, ,
\]
where $\delta(q_0,q_1)>0$ is the same as in Proposition~\ref{prop:diff-expectation-rooted}.
\end{proposition}
\begin{proof}[Proof of Theorem~\ref{prop:diff-unrooted}]
Even though the recurrence systems~\eqref{eq:rec-system}  and~\eqref{eq:rec-system-unrooted}  are different because of the additional linear terms, the coefficients $a_n(q),b_n(q),c_n(q),d_n(q)$ are the same in both systems and the proof of Proposition~\ref{prop:diff-expectation-rooted} can be emulated to prove Proposition~\ref{prop:diff-unrooted}, invoking Lemma~\ref{lem:link_f-g_unrooted} instead of Lemma~\ref{lem:link_f-g}.
\end{proof}

\begin{proposition}\label{prop:variance:unrooted}
    For all $q\in [0,1/2]$, we have
    \[
    \Var_q(S_n)=O(n^4)\, .
    \]
\end{proposition}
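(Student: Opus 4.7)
The plan is to apply the Efron--Stein inequality, exploiting the Markovian structure of the BCMRT to get sharp control on how $S_n$ changes when a single step of the construction is resampled. Write $T_n$ as a deterministic function of independent blocks $Y_2,\dots,Y_n$, where $Y_k$ encodes the two type choices and the two parent choices of the nodes arriving at step $k$. Let $S_n^{(k)}$ denote $S_n$ after replacing $Y_k$ by an independent copy $Y_k'$. Since the set of type-$A$ (resp.\ type-$B$) vertices present at any later time $j\geq k$ is identical in both realisations, one can couple so that every subsequent attachment lands on the same vertex in $T_n$ and $T_n^{(k)}$. Hence the two trees share the same vertex set and differ only in the parents of $(k,A)$ and $(k,B)$; in particular the subtrees $D_k^A,D_k^B$ rooted at those nodes are identical as vertex sets in both trees.

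Next I would bound $|S_n-S_n^{(k)}|$ by splitting pairs $\{x,y\}$ according to how they intersect $D_k^A$ and $D_k^B$. For pairs straddling exactly one of the two subtrees the change in $d(x,y)$ is at most $r_A$ (resp.\ $r_B$), the distance in $T_n$ between the old and resampled parents of $(k,A)$ (resp.\ $(k,B)$); for pairs with one endpoint in $D_k^A$ and the other in $D_k^B$ the change telescopes to at most $r_A+r_B$. Summing these contributions gives
\[
|S_n-S_n^{(k)}|\,\leq\, |D_k^A|(2n-|D_k^A|)\,r_A\,+\,|D_k^B|(2n-|D_k^B|)\,r_B\, ,
\]
after which $|D_k^A|(2n-|D_k^A|)\leq 2n|D_k^A|$ and $(a+b)^2\leq 2(a^2+b^2)$ yield
\[
\E\bigl[(S_n-S_n^{(k)})^2\bigr]\,\leq\, 16n^2\,\E\bigl[|D_k^A|^2 r_A^2+|D_k^B|^2 r_B^2\bigr]\, .
\]

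The crucial input for the rest is an independence property. The evolution of $(|D_k^A\cap A|,|D_k^A\cap B|)$ is a P\'olya-urn-type process driven solely by $Y_{k+1},\dots,Y_n$ and starting from the deterministic state $(1,0)$ at time $k$, so the law of $|D_k^A|$ is independent of $\mathcal{F}_{k-1}$ and hence of $r_A$ (which is a function of $\mathcal{F}_{k-1},Y_k,Y_k'$). A direct urn computation, analogous to the one giving $\E[|D_k^A|]=n/k$, yields $\E[|D_k^A|^2]\leq C(n/k)^2$. For the distance factor I would use $r_A\leq h(u_A)+h(u_A')$, where $h$ denotes depth in $T_{k-1}$ rooted at $(1,A)$, and propagate the recursions
\[
d_j=d_{j-1}+\tfrac{1}{j-1}\, ,\qquad s_j=s_{j-1}+\tfrac{1+2d_{j-1}}{j-1}
\]
for the first two moments of the depth of node $(j,A)$ (which by the $A$/$B$ symmetry are common to both types), giving $d_j=O(\log j)$ and $s_j=O(\log^2 j)$; averaging over a uniformly chosen candidate parent then produces $\E[r_A^2]=O(\log^2 k)$.

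Combining,
\[
\E[|D_k^A|^2 r_A^2]\,=\,\E[|D_k^A|^2]\,\E[r_A^2]\,=\,O\!\left(\frac{n^2\log^2 k}{k^2}\right)\, ,
\]
and summing over $k$ using $\sum_{k\geq 2}(\log k)^2/k^2<\infty$ gives $\sum_{k=2}^n\E[|D_k^A|^2 r_A^2]=O(n^2)$. Efron--Stein then produces $\Var_q(S_n)\leq \tfrac{1}{2}\sum_k\E[(S_n-S_n^{(k)})^2]=O(n^4)$, as announced. The most delicate step is the bound $\E[r_A^2]=O(\log^2 k)$: although one expects $T_{k-1}$ to have diameter of order $\log k$ as for the URT, one must carefully track the depth-moment recursion across the two communities because of the $q$-biased attachment rule. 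Together with the independence $|D_k^A|\perp r_A$, this is exactly what saves a $\log^2 n$ factor and turns the naive $O(n^4\log^2 n)$ Efron--Stein estimate into the sharp $O(n^4)$ rate.
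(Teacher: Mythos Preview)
Your proposal is correct and follows essentially the same route as the paper's proof: Efron--Stein, the independence of the descendant-subtree size $|D_k^\Lambda|$ from the reattachment distance $r_\Lambda$, the two moment bounds $\E[|D_k^\Lambda|^2]=O(n^2/k^2)$ and $\E[r_\Lambda^2]=O(\log^2 k)$, and the summability of $(\log k)^2/k^2$. The only cosmetic differences are that you resample the two step-$k$ attachments as a single block (the paper resamples each $(i,\Lambda)$ separately) and that you bound $|S_n-S_n^{(k)}|$ through the pairwise-distance interpretation of $S_n$ rather than the edge-product formula; both analyses collapse to the same inequality $|S_n-S_n'|\leq 2n\cdot r\cdot |D|$.
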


\begin{proof}[Proof of Theorem~\ref{prop:variance:unrooted}]
  Note that the tree $T_n$ can be written as a function of $2(n-1)$ independent random variables:
  \begin{equation}\label{eq:tree-function}
  T_n=f\left((b_{2,A},U_{2,A}), (b_{2,B},U_{2,B}),\dots, (b_{n,A},U_{n,A}), (b_{n,B},U_{n,B})\right)\, ,
  \end{equation}
  where for $i=2,\dots,n$ and $\Lambda=A,B$, 
  \[
(b_{i,\Lambda},U_{i,\Lambda})\sim \text{Bernoulli}(q)\otimes \text{Unif}\left(\{1,\dots,i-1\}\right)\, .
  \]
  If $b_{i,\Lambda}=0$, then node $(i,\Lambda)$ chooses its parent in community $\Lambda$. If $b_{i,\Lambda}=1$, then it chooses its parent in the other community. The variable $U_{i,\Lambda}$ then gives the time label of the parent. Let $T_n'(i,\Lambda)$ be defined as in~\eqref{eq:tree-function}, except that $(b_{i,\Lambda},U_{i,\Lambda})$ is replaced by an independent copy $(b'_{i,\Lambda},U'_{i,\Lambda})$. Let $p(i,\Lambda)$ be the parent of $(i,\Lambda)$ in $T_n$, and $p'(i,\Lambda)$ its new parent in $T'_n(i,\Lambda)$ (note that all the other parents remain unchanged). 
  Let us now investigate how this change affects the variable $S_n$. Note that when edge $e$ does not belong to the path $\cP_{i,\Lambda}$ connecting $p(i,\Lambda)$ and $p'(i,\Lambda)$, then the value of $|T_n^{e_+}| |T_n^{e_-}|$ is unchanged. And when $e$ belongs to $\cP_{i,\Lambda}$, then, assuming without loss of generality that $p(i,\Lambda)$ belongs to $T_n^{e_+}$, we have that $|T_n^{e_+}||T_n^{e_-}|$ becomes 
  \[
  \left(|T_n^{e_+}|-|T^+_n(i,\Lambda)|\right)\left(|T_n^{e_-}|+|T^+_n(i,\Lambda)|\right)\, ,
  \]
   where $T^+_n(i,\Lambda)$ is the subtree rooted at node $(i,\Lambda)$ and containing its descendants up to generation $n$. Hence, by the Efron--Stein inequality~\cite{efron1981jackknife, steele1986efron}, we have
  \begin{align*}
      \Var_q(S_n)&\leq \frac{1}{2}\sum_{\Lambda\in\{A,B\}}\sum_{i=2}^n \E_q\left[\left(S_n-S'_n(i,\Lambda)\right)^2\right]\\
      &=\frac{1}{2}\sum_{\Lambda\in\{A,B\}}\sum_{i=2}^n \E_q\left[\left(\sum_{e\in\cP_{i,\Lambda}}|T^+_n(i,\Lambda)|\left(|T_n^{e_-}|-|T_n^{e_+}|+|T^+_n(i,\Lambda)|\right) \right)^2\right]\, .
  \end{align*}
  Using that
  \[
\left||T_n^{e_-}|-|T_n^{e_+}|+|T^+_n(i,\Lambda)| \right|\leq |T_n^{e_-}|+|T_n^{e_+}|-|T^+_n(i,\Lambda)|\leq 2n\, ,
  \]
  we get
   \begin{align*}
      \Var_q(S_n)&\leq 2n^2\sum_{\Lambda\in\{A,B\}}\sum_{i=2}^n \E_q\left[|\cP_{i,\Lambda}|^2|T^+_n(i,\Lambda)|^2 \right]\, ,
  \end{align*}
  where $|\cP_{i,\Lambda}|$ is the length of $\cP_{i,\Lambda}$, i.e., the distance between $p(i,\Lambda)$ and $p'(i,\Lambda)$. Since, $|T^+_n(i,\Lambda)|$ is independent from the whole history up to time $i$ (including the knowledge of $p(i,\Lambda)$ and $p'(i,\Lambda)$), the variables $|T^+_n(i,\Lambda)|$ and $|\cP_{i,\Lambda}|$ are independent and we have
  \[
  \E_q\left[|\cP_{i,\Lambda}|^2|T^+_n(i,\Lambda)|^2 \right]=\E_q\left[|\cP_{i,\Lambda}|^2\right]\E_q\left[|T^+_n(i,\Lambda)|^2 \right]\, .
  \]
Note that
  \[
  |\cP_{i,\Lambda}|\leq (L_{i,\Lambda}-1)+(L'_{i,\Lambda}-1)+1\leq L_{i,\Lambda}+L'_{i,\Lambda}\, ,
  \]
  where $L_{i,\Lambda}$ (resp. $L'_{i,\Lambda}$) is the level of $(i,\Lambda)$ in $T_n$ (resp. in $T_n'(i,\Lambda)$). The proof can now be concluded using two lemmas that will be proved subsequently. By Lemma~\ref{lem:distance-to-root},
  \[
  \E_q\left[|\cP_{i,\Lambda}|^2\right]\leq 4\E_q[L_{i,\Lambda}^2]\leq 8H_{i-1}^2\leq 8\left(\log(i)+1\right)^2\, .
  \]
 Moreover, by Lemma~\ref{lem:size-Ti}, we have
 \[
 \E_q\left[|T^+_n(i,\Lambda)|^2\right]\leq \frac{2n^2}{i^2}\, \cdot 
 \]
 Hence we obtain
  \[
  \Var_q(S_n)\leq 64 n^4\sum_{i=2}^n \frac{\left(\log(i)+1\right)^2}{i^2}=O(n^4)\, .
  \]
\end{proof}
\begin{lemma}\label{lem:distance-to-root}
    For $\Lambda\in\{A,B\}$ and $n\geq 2$, let $L_{n,\Lambda}$ be the level of $(n,\Lambda)$ in $T_n$, i.e., its distance to the closest root, $(1,\Lambda)$ or $(1,\bar{\Lambda})$. Then, for all $q\in [0,1/2]$, we have
    \[
    \E_q[L_{n,\Lambda}^2]=H_{n-1}^2+H_{n-1}-\sum_{j=1}^{n-1}\frac{1}{j^2}\, ,
    \]
    where $H_{n-1}=\sum_{j=1}^{n-1}\frac{1}{j}$.
\end{lemma}
\begin{proof}[Proof of Theorem~\ref{lem:distance-to-root}]
Whatever $ \Lambda$, the parent of $(n, \Lambda)$ can be written $(u_1, \Lambda_1)$, where $u_1$ is some uniformly chosen number between $1$ and $i-1$. Then, whatever $ \Lambda_1$, if $u_1>1$, the parent of $(u_1, \Lambda_1)$ can be written $(u_2, \Lambda_2)$, where $u_2$ is some uniformly chosen number between $1$ and $u_1-1$. If we continue until we find $u_m=1$, then we found the closest root, and the level of $(n, \Lambda)$ is equal to $m$. As observed by Devroye~\cite{devroye1988applications}, this is exactly the distribution of the number of records in a uniform random permutation of $\{1,\dots,n-1\}$. In particular,
\[
\E_q[L_{n, \Lambda}]=H_{n-1}\quad \text{ and }\quad \Var_q(L_{n, \Lambda})=H_{n-1}-\sum_{j=1}^{n-1}\frac{1}{j^2}\, .
\]
\end{proof}

\begin{lemma}\label{lem:size-Ti}
    For $1\leq i\leq n$ and $\Lambda\in\{A,B\}$, let $T_n^+(i)$ be the subtree rooted at $(i,\Lambda)$ including all the descendants of $(i,\Lambda)$ up to time $n$. Then, for all $q\in [0,1/2]$, we have
    \[
    \E_q\left[|T^+_n(i)|^2 \right]\leq \frac{2n^2}{i^2}\, \cdot 
    \]
\end{lemma}
\begin{proof}[Proof of Theorem~\ref{lem:size-Ti}]
    For $ \Lambda\in A,B$, let $|T^+_{n, \Lambda}(i)|$ be the number of nodes of type $ \Lambda$ in $T_n^+(i)$.  Letting also $\overline \Lambda$ be the complement
of $ \Lambda$ in $\{A, B\}$, define
    \[
    w_{n, \Lambda}^i=(1-q)\frac{|T^+_{n, \Lambda}(i)|}{n}+q\frac{|T^+_{n,\bar{ \Lambda}}(i)|}{n}\, ,
    \]
    the conditional probability, given $\cF_n$, that node $(n+1, \Lambda)$ attaches in $T_n^+(i)$. We have
    \begin{align*}
        \E\left[ |T_{n+1}^+(i)|^2\,|\, \cF_n\right]&= |T_n^+(i)|^2+(4|T_n^+(i)|+4)w_{n,A}^iw_{n,B}^i\\
        &\hspace{1cm}+(2|T_n^+(i)|+1)\left(w_{n,A}^i(1-w_{n,B}^i)+w_{n,B}^i(1-w_{n,A}^i)\right)\\
        &= \left(1+\frac{2}{n}\right)|T_n^+(i)|^2+\frac{|T_n^+(i)|}{n}+2w_{n,A}^iw_{n,B}^i\, ,
    \end{align*}
    where we used that $w_{n,A}^i+w_{n,B}^i=\frac{|T_n^+(i)|}{n}$. Taking logarithm on both sides and using concavity, one has
    \[
    w_{n,A}^iw_{n,B}^i\leq \left(\frac{|T^+_n(i)|}{2n}\right)^2\leq\frac{1}{2}\left(\frac{|T^+_n(i)|}{2n}\right)^2\, .
    \]
    Hence,
    \[
    \E\left[ |T_{n+1}^+(i)|^2\,|\, \cF_n\right]\leq  \frac{(n+1)^2}{n^2}|T_n^+(i)|^2+\frac{|T_n^+(i)|}{n}\, .
    \]
   Taking expectation and using that $\E\left[|T_n^+(i)|\right]=\frac{n}{i}$ by symmetry, we see by induction that the sequence $u_n=\frac{\E\left[|T_n^+(i)|^2\right]}{n^2}$ satisfies
   \[
   u_{n+1}\leq u_n+\frac{1}{i}\leq u_i+\frac{1}{i}\sum_{j=i+1}^{n+1}\frac{1}{j^2}\leq \frac{2}{i^2}\, \cdot 
   \]
\end{proof}

\subsection*{Acknowledgement}
The authors would like to thank two anonymous referees for their valuable remarks and suggestions.
V.V. is supported by Israeli Science Foundation grant number 1695/20 and ERC grant 83473. She also acknowledges support from the foundation \textit{Ferran Sunyer i Balaguer} and \textit{Institut d'Estudis Catalans}, who helped initiate this research through a PhD student mobility scholarship in 2021.

\appendix
\renewcommand\thefigure{\thesection.\arabic{figure}}    
\setcounter{figure}{0}    
\section{Complexity of fair bisection in recursive trees}
The clustering algorithm of Section 3 finds a bisection of the nodes such that none of the two  parts contains repeated labels (such a bisection is called \textit{fair}) and the number of edges in the cut is roughly less than $2q$. A closely related task is finding a fair bisection that minimises the number of edges in the cut (\textit{min fair bisection}). This is an NP-hard problem, as was shown in~\cite{casel_et_al:LIPIcs.FORC.2023.9} for general trees, not necessarily recursive. However, the general problem of minimum-cut bisection is fixed-parameter tractable with respect to treewidth~\cite{jansen2005polynomial}. In particular, for trees there exists a polynomial time algorithm that solves the task, which comes from a bottom up, dynamic programming approach. In view of these results, it is not clear whether the recursiveness\footnote{the property that the node labels are increasing in any path from a root to a leaf} of our trees  make the task easier and bring it to polynomial time. We will show that this is not true and the problem remains NP-hard even for recursive trees. 

In view of our hardness proof, we do not hope to achieve polynomial time by applying
an existing generic graph algorithm. 
If polynomial time is possible, the algorithm would have to rely either on different model-specific properties, or on the detailed structure of random colorings of BCMRT trees that would
allow to look for fair bisections that are not optimal in terms of cut size but
still are inside the margin, or on something different entirely, for instance spectral techniques. These are all non-trivial tasks and interesting in their own right, hence we do not pursue the matter further in this paper.  

\subsection{The reduction}

The $NP$-hard problem of vertex cover in cubic graphs (VCC) can be reduced to our problem's proxy, minimum fair bisection, upon viewing the proof~\cite[Theorem 6]{bonsma2006complexity}. There, the authors reduce VCC to the \textit{paint-shop problem for words}. In this problem, one is given a word $w=w_1\dots , w_k$, such that for every letter $w_j$ in $w$ there exists exactly one index $i\neq j$ such that $w_i=w_j$. The task is to find a 2-coloring of the word's letters such that two appearances of the same symbol have different color (this we call a \textit{valid} coloring) and the number of color changes as we parse the word from one side to the other is minimised. To reduce VCC to min fair bisection, it is enough to make an addition to the reduction in~\cite{bonsma2006complexity}. Let us note that this strategy was also followed in~\cite{casel_et_al:LIPIcs.FORC.2023.9} for the case of general trees, not necessarily recursive, but their proof does not transfer to our case.

In the proof of~\cite[Theorem 6]{bonsma2006complexity}, a cubic graph $G$ of $n$ nodes is mapped to a sequence from $i=1$ until $i=n$ of subwords $W_i$:
\begin{equation}
A_iD_{1i}D_{2i} ...D_{(i-1)i}E_{a}^iE_{b}^iE_{c}^iB_iB_iD_{i(i+1)}D_{i(i+2)} ...D_{in}A_iC_iC_i~,\label{reduction1}
\end{equation}
placed one next to the other to form one word $W$. The $i$-th subword corresponds to the $i$-th node of $G$ according to a specified ordering (see~\cite{bonsma2006complexity} for more details on that). The symbols $A_i, C_i$ appear twice in $W$, only in the $i$-th block. By construction, the symbols $D_{ij}$ are symmetric in $i,j$, that is, $D_{ij}=D_{ji}$ for all $i\neq j$. Hence, each of them appears twice in $W$, once  in the $i$-th and once in the $j$-th subword. Again by construction, for every $i\in[n]$, $k_i\in\{a,b,c\}$, there exists unique  $j\neq i$, $k_j\in \{a,b,c\}$ such that $E_{k_i}^i=E_{k_j}^j$. In~\cite{bonsma2006complexity}, the sequence $E_{a}^iE_{b}^iE_{c}^i$ is associated to the three edges connected with node $i$ in some specified ordering.

Given $W$, we construct a recursive tree $T^W$, where each node label is repeated twice, in the following way (see also Figure~\ref{NP-hard}). For each symbol appearance in $W$ create a new node and label it with the symbol it's associated with. Note that two different nodes corresponding to the same symbol have the same label. Create also $2n$ nodes with labels $(W_i)_{i\leq n}$ (each $W_i$ associated to a pair of nodes). For each $i\in[n]$, pick an arbitrary node with label $W_i$, mark it, and connect it with edge to both nodes with label $B_i$. Connect all marked nodes to form a path, following the order that appears in the associated subword. Connect the two nodes with label $W_1$ and connect the rest unmarked nodes with label $W_j$, $j\in\{2,\dots , n\}$, to the unmarked node $W_1$. For each subword $W_i$, when $1<i<n$ (resp. when $i=1$ or $i=n$), consider the subword beginning at $C_{i-1}A_i$ and ending at $E_{c}^i$ (resp. the subword beginning at $A_1$ and ending at $E_c^1$ and the subword beginning at $C_{n-1}$ and ending at $E_c^n$). In that order, connect with edges the associated nodes so that they form a path. Connect the node with label $E_c^i$ to an arbitrary node labelled $B_i$, and mark that node. Similarly, for $i\not\in \{1,n\}$ (resp. in $W_1$ and $W_n$), consider the subword in $W_i$ that begins with $D_{i(i+1)}$ and ends with the first appearance of $C_i$ (resp. that begins with $D_{12}$ and ends with the first appearance of $C_1$, and that begins with the second appearance of $C_n$ and ends with $A_n$), create a  path graph for each of them, and connect the node with label $D_{i(i+1)}$ to the unmarked node with label $B_i$. 

We now describe a partial ordering of the node labels such that the tree is recursive. Let $W_1$ be the smallest label and $W_i<W_j$ for any $j>i$. For any $i$, let $B_i$ be larger than any $W_j$, and also let any $E^i$, with any subscript, be larger than any $B_j$. Also, let $D_{ij}>D_{ik}$ for any $i<j<k$ and $D_{ij}<D_{ik}$ for any $j<k<i$. Consider all $A_i$ and $C_i$ to be larger than all the aforementioned nodes and $A_i<C_j$ for all $i,j$. To construct an ordering of $\{E_a^i,E_b^i,E_c^i\}_{i\in[n]}$, we need to refer to the specifics of the construction in~\cite{bonsma2006complexity}. There, the authors create a total ordering of the edges of $G$, which, when restricted to edges touching a single node $i$, gives the ordering $E_{a}^i<E_b^i<E_c^i$. For our purpose, it is enough to reverse this ordering. The partial ordering that we  described makes $T^W$ recursive and can be extended to a total ordering.

Now assume a valid 2-coloring with $k$ changes in $W$. Then there exists a valid  coloring with $k+2-n$ changes in $T^W$, by coloring the sequence of nodes $W_1\dots W_n$ that forms a path with the same arbitrary color, the rest $W_i$ with the other color, and the rest of the nodes as they are colored in $W$. Conversely, any valid coloring of $T^W$ with at most $k$ color changes corresponds to a coloring of $W$ with at most $k+n-2$ changes.

\begin{figure}
\vspace*{.2cm}
\includegraphics[scale=.6]{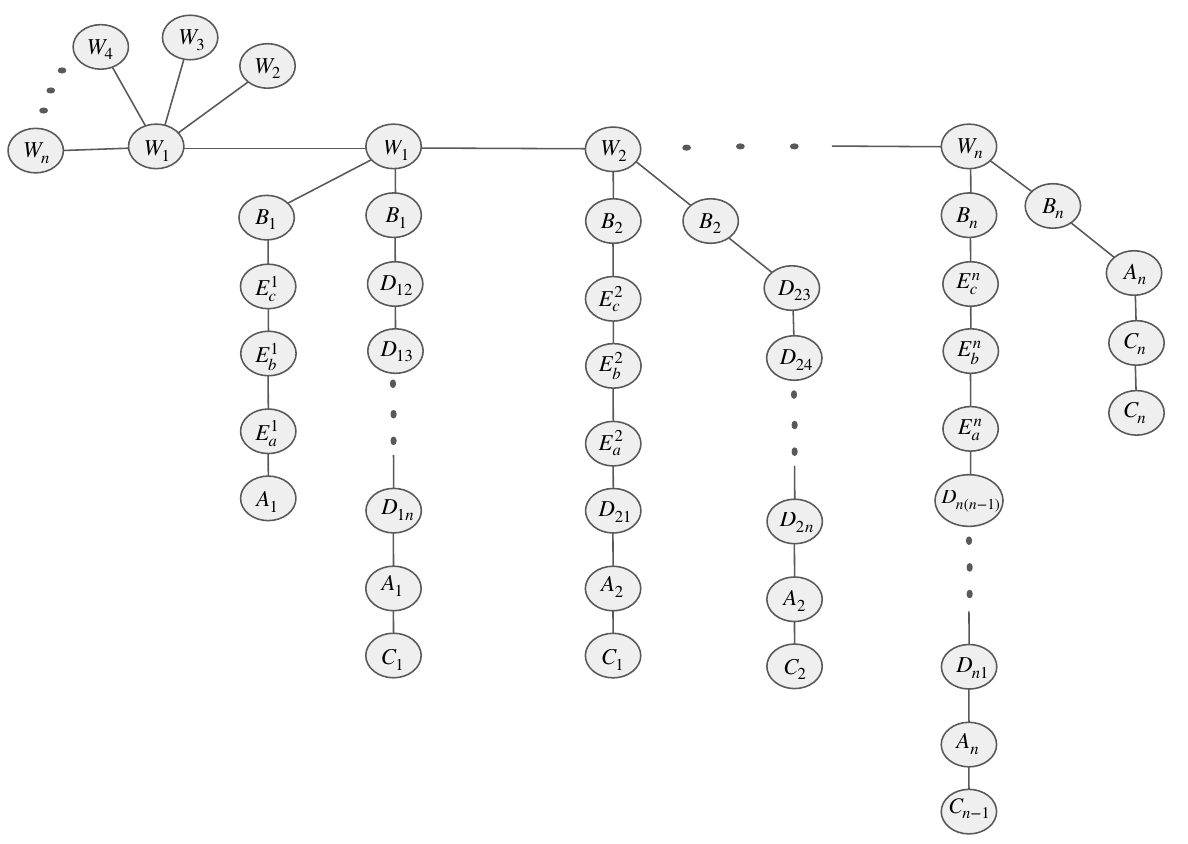}
    \label{NP-hard}
    \caption{The transformation of a word corresponding to a cubic planar graph, of the form~\ref{reduction1}, to a balanced recursive tree.}
\end{figure}

\end{document}